  \newtheorem{thm}{\protect\theoremname}
  \newtheorem{lem}[thm]{\protect\lemmaname}
  \newtheorem{prop}[thm]{\protect\propositionname}
\newcommand{\ZZ}{{\mathbb Z}}
\newcommand{\FF}{{\mathbb F}}
\newcommand{\res}{{\rm res}}
  \providecommand{\corollaryname}{Corollary}
  \providecommand{\factname}{Remark}
  \providecommand{\lemmaname}{Lemma}
  \providecommand{\propositionname}{Proposition}
  \providecommand{\theoremname}{Theorem}
\begin{document}

\title[Counting roots of polynomials and roots of resultants]{Some results on counting roots of polynomials and the Sylvester resultant. }
\author{ Michael Monagan and Baris Tuncer }
\address{Department of Mathematics, Simon Fraser University, Burnaby, B.C., V5A 1S6, CANADA.}
\maketitle

\begin{abstract}  
\paragraph{Abstract.} \ We present two results, the first on the distribution
of the roots of a polynomial over the ring of integers modulo $n$ and the second
on the distribution of the roots of the Sylvester resultant of two multivariate
polynomials.
The second result has application to polynomial GCD computation and solving 
polynomial diophantine equations.




\end{abstract}

\keywords{roots of polynomials, finite fields, the Sylvester resultant, unlucky evaluation points}

\section{Introduction}

\newcommand{\Var}{{\rm Var}}
\newcommand{\Prob}{{\rm Prob}}
\newcommand{\E}{{\rm E}}

Let $\FF_q$ denote the finite field with $q$ elements and 
let $\ZZ_n$ denote the ring of integers modulo $n$.
Let ${\rm E}[X]$ denote the expected value of a random variable $X$
and let $\Var[X]$ denote the variance of $X$.

Let $f$ be a polynomial in $\FF_q[x]$ of a given degree $d > 0$ and let $X$
be the number of distinct roots of $f$.
Schmidt proves in Ch. 4 of \cite{schmidt} that $\E[X]=1$ and for $d > 1$, $\Var[X] = 1-1/q$.
This result has been generalized by Knopfmacher and Knopfmacher in \cite{knopfmacher2}
who count distinct irreducible factors of a given degree of $f$.
The two main results presented in this paper are Theorems 1 and 2 below.

\setcounter{thm}{0}
\begin{thm}
Let $\phi(n) = |\{ \, 1 \! \le \! i \! \le \! n : \gcd(i,n)=1 \}|$ denote Euler's totient function.
Let $X$ be a random variable which counts the number of distinct roots of a monic
polynomial in $\ZZ_n[x]$ of degree $m>0.$  Then \vspace*{-4mm}
\begin{itemize}
\item[(a)] ${\E}[X]=1$ \ and \vspace*{-2mm}
\item[(b)] if $m=1$ then $\mathrm{Var}[X]=0,$ otherwise 
$\mathrm{Var}[X]=\sum_{d|n,d\neq n}\frac{d}{n}\phi(\frac{n}{d})=\sum_{d|n}\frac{d-1}{n}\phi(\frac{n}{d})$. \\
In particular, if $n=p^{k}$
where $p$ is a prime number and $k\geq1$, $\mathrm{Var}[X]=k(1-1/p).$
\end{itemize}
\end{thm}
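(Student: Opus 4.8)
The plan is to model a uniformly random monic $f \in \ZZ_n[x]$ of degree $m$ by choosing its $m$ low-order coefficients independently and uniformly from $\ZZ_n$, and to write $X = \sum_{a \in \ZZ_n} I_a$, where $I_a$ is the indicator of the event $f(a)=0$. Writing $f(x) = x^m + \sum_{i=0}^{m-1} c_i x^i$, the value $f(a)$ is an affine function of the $c_i$ in which the constant term $c_0$ appears with coefficient $1$; hence for each fixed $a$, as $c_0$ ranges over $\ZZ_n$ the value $f(a)$ takes each residue exactly once, so $\Prob[f(a)=0] = 1/n$. Linearity of expectation then gives $\E[X] = \sum_{a \in \ZZ_n} 1/n = 1$, proving (a).

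For (b) I would use $\Var[X] = \E[X^2] - 1$ together with $X^2 = \sum_a I_a + \sum_{a \neq b} I_a I_b$ (since $I_a^2 = I_a$), which reduces the problem to
\[ \Var[X] = \sum_{a \neq b} \Prob[f(a)=0 \text{ and } f(b)=0], \]
the sum running over ordered pairs of distinct $a,b \in \ZZ_n$. The case $m=1$ is then immediate: a monic linear polynomial $x + c_0$ has the single root $-c_0$, so $X \equiv 1$ and $\Var[X] = 0$; equivalently every joint probability above vanishes because $f(a)=f(b)=0$ forces $a=b$.

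The crux is evaluating the joint probability for $m \ge 2$. Here I would invoke the factor theorem over the commutative ring $\ZZ_n$: since $x-a$ is monic, $f(a)=0$ holds iff $f(x) = (x-a)g(x)$ for a unique monic $g$ of degree $m-1$, setting up a bijection between the monic $f$ of degree $m$ vanishing at $a$ and the monic $g$ of degree $m-1$. Under this correspondence $f(b) = (b-a)\,g(b)$, and since $m-1 \ge 1$ the value $g(b)$ is uniform on $\ZZ_n$ by the same constant-term argument as before. Conditioning on $f(a)=0$ thus gives
\[ \Prob[f(a)=0,\ f(b)=0] = \tfrac1n \cdot \Prob_{U}[(b-a)U = 0], \]
with $U$ uniform on $\ZZ_n$. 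Setting $e = \gcd(b-a,n)$, the congruence $(b-a)U \equiv 0 \pmod n$ is equivalent to $U \equiv 0 \pmod{n/e}$ and so has exactly $e$ solutions, giving $\Prob[f(a)=0,\ f(b)=0] = \gcd(b-a,n)/n^2$. I expect this step---handling the non-invertibility of $b-a$ in $\ZZ_n$ via the gcd count---to be the main obstacle, since over a field the answer would simply be $1/n^2$.

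It then remains to sum. Substituting $t = b-a$ and noting that each nonzero $t \in \ZZ_n$ arises from exactly $n$ ordered pairs yields
\[ \Var[X] = \frac1{n^2}\sum_{t=1}^{n-1} n\,\gcd(t,n) = \frac1n\sum_{t=1}^{n-1}\gcd(t,n). \]
Finally I would apply the standard identity $\sum_{t=1}^{n}\gcd(t,n) = \sum_{d\mid n} d\,\phi(n/d)$, which follows from grouping the $t$ by the value $\gcd(t,n)=d$ (there being $\phi(n/d)$ of them); subtracting the $t=n$ term $\gcd(n,n)=n$ and dividing by $n$ produces $\Var[X] = \sum_{d\mid n,\, d\neq n}\frac{d}{n}\phi(n/d)$. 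The second displayed form follows from $\sum_{d\mid n}\phi(n/d) = n$, and the prime-power specialization $n=p^k$ follows by checking that each divisor $d = p^j$ with $0 \le j \le k-1$ contributes exactly $1-1/p$ to the sum.
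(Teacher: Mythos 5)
Your proposal is correct. The number-theoretic core is the same as the paper's: you compute $\Prob[f(a)=0]=1/n$ and $\Prob[f(a)=0,\,f(b)=0]=\gcd(b-a,n)/n^2$ via the factor theorem over $\ZZ_n$ (monic divisor $x-a$, unique quotient $g$, and the observation that $(b-a)g(b)\equiv 0 \pmod n$ has exactly $\gcd(b-a,n)$ solutions in the uniformly distributed value $g(b)$), which matches the paper's counts $|A_i|=n^{m-1}$ and $|A_\gamma\cap A_\beta|=dn^{m-2}$ with $d=\gcd(\beta-\gamma,n)$. Where you genuinely diverge is the surrounding machinery. The paper first develops a generalized inclusion--exclusion apparatus (its Lemma 2 and Proposition 1, giving $\sum_i i\,b_i=t_1$ and $\sum_i i^2 b_i=t_1+2t_2$) and feeds the set cardinalities into it; you get the same two identities for free from $X=\sum_a I_a$ and $X^2=\sum_a I_a+\sum_{a\ne b}I_aI_b$, which is more elementary and self-contained. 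Your final summation is also cleaner: substituting $t=b-a$ over ordered pairs and invoking $\sum_{t=1}^{n}\gcd(t,n)=\sum_{d\mid n}d\,\phi(n/d)$ replaces the paper's longer pairing argument that shows $\sum_j \alpha_j=\tfrac{n}{2}\phi(n/d)$ for the $\phi(n/d)$ residues with $\gcd(\alpha_j,n)=d$. The trade-off is that the paper's Proposition 1 is reused for its Theorems 2 and 3, whereas your route is optimized for this one statement; as a proof of Theorem 1 alone, yours is shorter and equally rigorous.
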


\begin{thm}
Let $f,g$ be polynomials in $\FF_q[x,y]$ of the form
$f = c_n x^n + \sum_{i=0}^{n-1} \sum_{j=0}^{n-i} c_{ij} x^i y^j$ and
$g = d_m x^m + \sum_{i=0}^{m-1} \sum_{j=0}^{m-i} d_{ij} x^i y^j$ with
$c_n \ne 0$ and $d_m \ne 0$, thus of total degree $n$ and $m$ respectively.
Let $X$ be a random variable that counts the number of $\gamma\in\FF_{q}$
such that $\mathrm{gcd}(f(x,\gamma),g(x,\gamma))\neq 1$.
If $n>0$ and $m>0$ then \vspace*{-2mm}
\begin{itemize}
\item[(a)] $\mathrm{E}[X]=1$ \ and \vspace*{-2mm}
\item[(b)] $\mathrm{Var}[X]=1-1/q.$
\end{itemize}
\end{thm}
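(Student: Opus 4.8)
The plan is to write $X=\sum_{\gamma\in\FF_{q}}I_{\gamma}$, where $I_{\gamma}$ is the indicator of the event $\gcd(f(x,\gamma),g(x,\gamma))\neq 1$, and to compute both moments by the method of indicators. The bridge to the title's resultant is that, because the leading coefficients $c_{n},d_{m}$ of $f,g$ in $x$ are nonzero constants, specialization commutes with the resultant: if $R(y)=\res_{x}(f,g)\in\FF_{q}[y]$ then $R(\gamma)=\res_{x}(f(x,\gamma),g(x,\gamma))$ for every $\gamma$, so $I_{\gamma}=1$ exactly when $R(\gamma)=0$ and $X$ is the number of roots of $R$ in $\FF_{q}$. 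I will not try to show that $R$ is uniformly distributed (it is not); instead I will read the moments directly off the joint law of the specializations, so that Schmidt's values $\E=1$, $\Var=1-1/q$ for a genuinely random polynomial reappear here for the very non-uniform $R$.

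First I would pin down the one-point law. Writing $f(x,\gamma)=c_{n}x^{n}+\sum_{i=0}^{n-1}e_{i}(\gamma)x^{i}$ with $e_{i}(\gamma)=\sum_{j=0}^{n-i}c_{ij}\gamma^{j}$, the form $e_{i}(\gamma)$ contains $c_{i0}$ with coefficient $1$; since the families $\{c_{ij}\}_{j}$ for distinct $i$ are disjoint and the $c_{ij}$ are independent uniform, the vector $(e_{0}(\gamma),\dots,e_{n-1}(\gamma))$ is uniform on $\FF_{q}^{n}$ and independent of $c_{n}$. Hence $f(x,\gamma)$ is a uniformly random polynomial of degree exactly $n$ and $g(x,\gamma)$ one of degree exactly $m$, independently. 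As $\gcd$ is unchanged by rescaling, $I_{\gamma}=1$ iff the monic parts are non-coprime, and those monic parts are independent and uniform of degrees $n,m$; so part (a) reduces to the counting lemma that two such monic polynomials with $n,m\ge1$ are non-coprime with probability exactly $1/q$. I would prove this from the gcd decomposition $q^{n+m}=\sum_{k}q^{k}\,c(n-k,m-k)$, where $c(a,b)$ counts coprime monic pairs; in generating functions this reads $C(u,v)/(1-quv)=1/\bigl((1-qu)(1-qv)\bigr)$, so $C(u,v)=(1-quv)/\bigl((1-qu)(1-qv)\bigr)$ and $c(n,m)=q^{n+m}-q^{n+m-1}$, giving $\Prob[\gcd\neq1]=1/q$. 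Summing the $q$ marginals yields $\E[X]=q\cdot(1/q)=1$.

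For part (b) the key is that the indicators $I_{\gamma}$ are pairwise independent, even though the specializations $f(x,\gamma)$ for different $\gamma$ share all the randomness in the $c_{ij}$. Fix $\gamma\neq\delta$. For each $i\le n-1$ the linear map $(c_{i0},\dots,c_{i,n-i})\mapsto(e_{i}(\gamma),e_{i}(\delta))$ has, among its columns, those for $j=0,1$ equal to $\left(\begin{smallmatrix}1&\gamma\\1&\delta\end{smallmatrix}\right)$, of determinant $\delta-\gamma\neq0$; the map is therefore surjective, so $(e_{i}(\gamma),e_{i}(\delta))$ is uniform on $\FF_{q}^{2}$, and independence across $i$ makes the lower-coefficient vectors of $f(x,\gamma)$ and $f(x,\delta)$ jointly uniform, hence independent, once we condition on the shared $c_{n}$. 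The same holds for $g$ with $d_{m}$, and the $f$- and $g$-randomness are independent. Since $I_{\gamma}$ depends on $f(x,\gamma),g(x,\gamma)$ only through their monic parts, conditioning on $(c_{n},d_{m})$ makes $I_{\gamma}$ and $I_{\delta}$ independent Bernoulli$(1/q)$ variables; as this conditional law does not depend on $(c_{n},d_{m})$, they are unconditionally independent too. Thus $\mathrm{Cov}(I_{\gamma},I_{\delta})=0$ for $\gamma\neq\delta$ and $\Var[X]=\sum_{\gamma}\Var[I_{\gamma}]=q\cdot\tfrac1q\bigl(1-\tfrac1q\bigr)=1-\tfrac1q$.

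The main obstacle is exactly this decoupling: the $q$ specializations are strongly correlated as polynomials, so it is not a priori clear that their coprimality indicators are independent, and everything hinges on the nonvanishing of the $2\times2$ Vandermonde minor $\delta-\gamma$, which lets two distinct evaluation points read independent slices of the coefficient randomness. The counting lemma delivering the clean value $1/q$ is the other load-bearing ingredient, but it is routine once organized through the generating function above.
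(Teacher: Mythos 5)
Your proof is correct, and at its core it computes the same two quantities as the paper's proof: the probability $1/q$ that a single specialization $(f(x,\gamma),g(x,\gamma))$ is a non-coprime pair, and the probability $1/q^{2}$ that this happens simultaneously at two distinct points, which are then fed into the standard second-moment identity. The differences are in the route to those two numbers. Where you use indicators, linearity of expectation and pairwise independence, the paper invokes its Proposition 1 (the identities $\sum_i i\,b_i=t_1$ and $\sum_i i^2 b_i=t_1+2t_2$ for the sets $A_\gamma$); these are equivalent bookkeeping devices. More substantively, the paper establishes the one- and two-point counts only at the special points $0$ and $(0,1)$ --- counting extensions of a prescribed pair of univariate images, with $s=q^{n+m-1}$ non-coprime image pairs and $s^{2}$ independent choices at two points --- and then transfers to arbitrary $\gamma$ and $(\gamma,\theta)$ via the affine substitutions $y\mapsto y-\gamma$ and $y\mapsto(y-\gamma)/(\theta-\gamma)$, which must be checked to preserve monicness, total degree and coprimality. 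Your argument gets the joint uniformity of the two specializations directly at arbitrary $\gamma\neq\delta$ from the invertibility of the $2\times 2$ Vandermonde minor $\delta-\gamma$ sitting inside each evaluation map $(c_{i0},\dots,c_{i,n-i})\mapsto(e_i(\gamma),e_i(\delta))$ (available exactly because $\deg c_{n-d}\le d$ with $d\ge 1$); this is cleaner, avoids the transfer step, and makes the pairwise independence of the indicators --- the fact the paper encodes as $|A_{\gamma,\theta}|=s^{2}D_1R_1$ --- transparent. Finally, you supply a generating-function proof of the $1/q$ non-coprimality probability, $C(u,v)=(1-quv)/\bigl((1-qu)(1-qv)\bigr)$, where the paper cites the literature. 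All steps check out.
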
 

Theorems 1 and 2 were found by computation.
We give some details on our computations later in the paper.
To prove the results we use a generalization of the 
Inclusion Exclusion principle (Proposition 1) which allows us to
determine ${\E}[X]$ and $\Var[X]$ without having explicit
formulas for $\Prob[X=k]$.  
Before proving these results we connect Theorem 2 with the
Sylvester resultant and with polynomial GCD computation and with
solving polynomial diophantine equations.

Let $F$ be a field and let $A$ and $B$ be polynomials in $F[x_0,x_1,\dots,x_n]$ with positive degree in $x_0$.
The Sylvester resultant of $A$ and $B$ in $x_0$, denoted $\res_{x_0}(A,B)$, is the determinant of Sylvester's matrix.
We gather the following facts about it into Lemma 1 below.  
Proofs may be found in Ch. 3 of \cite{CLO}.
Note, in the Lemma $\deg A$ denotes the total degree of $A$.

\setcounter{thm}{0}

\begin{lem} Let $R = \res_{x_0}(A,B)$ 
\begin{itemize} \item[]

(i) $R$ is a polynomial in $F[x_1,\dots,x_n]$ \ ($x_0$ is eliminated), \\
(ii) $\deg R \le \deg A \deg B$ \ (Bezout bound).

For $A$ and $B$ monic in $x_0$ and $\alpha \in F^n$

(iii) $\gcd(A(x_0,\alpha),B(x_0,\alpha)) \ne 1 \iff \res_{x_0}(A(x_0,\alpha),B(x_0,\alpha)) = 0$ and \\
(iv) $\res_{x_0}(A(x_0,\alpha),B(x_0,\alpha)) = R(\alpha)$.
\end{itemize}
\end{lem}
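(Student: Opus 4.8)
The plan is to prove the four parts of Lemma 1 by appealing to the standard theory of the Sylvester resultant, since the statement explicitly points the reader to Chapter 3 of \cite{CLO}. These are classical facts, so the work is to assemble the right specializations rather than to discover anything new.

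First I would recall the definition: writing $A = \sum_{i=0}^{p} a_i(x_1,\dots,x_n)\, x_0^{i}$ and $B = \sum_{j=0}^{q} b_j(x_1,\dots,x_n)\, x_0^{j}$ with $p = \deg_{x_0} A > 0$ and $q = \deg_{x_0} B > 0$, the resultant $R = \res_{x_0}(A,B)$ is the determinant of the $(p+q)\times(p+q)$ Sylvester matrix built from the coefficients $a_i, b_j$. For part (i), each entry of this matrix is either $0$ or one of the $a_i, b_j$, which lie in $F[x_1,\dots,x_n]$; the determinant is a polynomial expression in these entries, hence $R \in F[x_1,\dots,x_n]$, and $x_0$ no longer appears. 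Part (ii) is the Bezout degree bound. I would obtain it by tracking the total degrees: the coefficient $a_i$ has total degree at most $\deg A - i$ and $b_j$ has total degree at most $\deg B - j$. Expanding the determinant as a signed sum of products, one checks that each surviving monomial contributes total degree at most $\deg A \cdot \deg B$ once the combinatorial bookkeeping over the two blocks of the Sylvester matrix is done; this is the one genuinely computational point, and it is precisely the content of the Bezout bound as presented in \cite{CLO}.

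For parts (iii) and (iv) I would invoke the specialization property of resultants. The key structural fact is that the resultant commutes with evaluation homomorphisms provided the leading coefficients in $x_0$ do not vanish under the evaluation. Here $A$ and $B$ are monic in $x_0$, so $a_p = b_q = 1$ identically, and substituting $\alpha \in F^n$ for $(x_1,\dots,x_n)$ preserves the degrees in $x_0$. Hence the Sylvester matrix of the specialized polynomials $A(x_0,\alpha)$, $B(x_0,\alpha)$ is exactly the Sylvester matrix of $A, B$ with each entry $a_i, b_j$ replaced by its value $a_i(\alpha), b_j(\alpha)$. Taking determinants and using that evaluation is a ring homomorphism gives $\res_{x_0}(A(x_0,\alpha),B(x_0,\alpha)) = R(\alpha)$, which is (iv). Then (iii) follows from the classical one-variable theorem that two univariate polynomials over a field, with at least one of positive degree and nonvanishing leading coefficients, have a nonconstant common factor if and only if their resultant is zero; applied to $A(x_0,\alpha)$ and $B(x_0,\alpha)$ this is exactly the stated equivalence.

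The main obstacle I anticipate is the degree bookkeeping in (ii): the naive bound from reading off row sums of the Sylvester matrix gives something weaker, and one must argue carefully that the total-degree contributions across the two coefficient blocks combine to the sharp product $\deg A \cdot \deg B$ rather than a sum. Everything else is a direct appeal to the monic hypothesis and the homomorphism property of determinants, so I would keep those parts brief and cite \cite{CLO} for the detailed verifications.
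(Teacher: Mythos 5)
Your proposal is correct and matches the paper's treatment: the paper does not prove Lemma 1 at all but simply defers to Ch.~3 of \cite{CLO}, and your sketch assembles exactly the standard facts (entries of the Sylvester matrix lie in $F[x_1,\dots,x_n]$, the degree bookkeeping $\deg a_i \le \deg A - i$ for the Bezout bound, specialization commuting with the determinant because monicity preserves $\deg_{x_0}$, and the classical univariate criterion) that the cited reference supplies. The one quibble is that the ``obstacle'' you anticipate in (ii) is not really one: summing the row-wise degree bounds over a permutation term gives $q\deg A + p\deg B - pq = \deg A\deg B - (\deg A - p)(\deg B - q) \le \deg A \deg B$ directly, where $p = \deg_{x_0}A$ and $q = \deg_{x_0}B$.
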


\noindent
Properties (iii) and (iv) connect the roots of the resultant with Theorem 2 and 3.

\subsection{Polynomial GCD computation and polynomial diophantine equations.}

Our motivation comes from the following problems in computer algebra.
Let $A, B$ be polynomials in $\ZZ[x_0,x_1,\dots,x_n]$ and $G = \gcd(A,B)$.
Thus $A=G\widehat A$ and $B=G\widehat B$ for some polynomials $\widehat A$ and $\widehat B$ called
the cofactors of $A$ and $B$.
Modular GCD algorithms compute $G$ modulo a sequence of primes $p_1,p_2,p_3,\dots$
and recover the integer coefficients of $G$ using Chinese remaindering.  
The fastest algorithms for computing $G$ modulo a prime $p$ interpolate $G$ from univariate images.
Maple, Magma and Mathematica all currently use Zippel's algorithm (see \cite{zippel79,wittkopf05}).
Let us write
$$A = \sum_{i=0}^k a_i x_0^i, 
~~B = \sum_{i=0}^l b_i x_0^i, ~~{\rm and}
~~G = \sum_{i=0}^m c_i x_0^i$$ 
where the coefficients $a_i,b_i,c_i \in \FF_p[x_1,\dots,x_n]$.
Zippel's algorithm picks points $\alpha_i \in \FF_p^n$, computes monic univariate images of $G$
$$ g_i = \gcd(A(x_0,\alpha_i),B(x_0,\alpha_i)),$$
scales them (details omitted), then interpolates the coefficients $c_i(x_1,\dots,x_n)$ of $G$ from
the coefficients of these (scaled) images.

\medskip
What if $\gcd( \widehat A(x_0,\alpha_j),\widehat B(x_0,\alpha_j)) \ne 1$ for some $j$?
For example, if $\widehat A = x_0^2 + x_2$ and $\widehat B = x_0^2 + x_2 + (x_1-1)$ then
$\gcd(\widehat A,\widehat B)=1$ but $\gcd(\widehat A(x_0,1,\beta),\widehat B(x_0,1,\beta)) \ne 1$ for
all $\beta \in \FF_p$.  The evaluation points $(1,\beta)$ are said to be {\em unlucky}.
We cannot use the images $\gcd(A(x_0,1,\beta),B(x_0,1,\beta))$ to interpolate $G$.
The same issue of unlucky evaluation points arises in our current work in \cite{tuncer}
where, given polynomials $a,b,c \in \ZZ[x_0,x_1,\dots,x_n]$ with $\gcd(a,b)=1$
we want to solve the diophantine equation $\sigma a + \tau b = c$
for $\sigma$ and $\tau$ in $\ZZ[x_0,x_1,\dots,x_n]$ by interpolating $\sigma$ and $\tau$
modulo a prime $p$ from univariate images.

\medskip
What is the maximum number of unlucky evaluation points that can occur?
And what is the expected number of unlucky evaluation points?
We answer the first question for $A$ and $B$ monic in $x_0$.  
Lemma 1 implies $\alpha_j$ is unlucky if and only if $R(\alpha_j)=0$ where 
$R = \res_{x_0}(\widehat A,\widehat B) \in \FF_p[x_1,\dots,x_n].$
If $\alpha_j$ is chosen at random from $\FF_p^n$ then applying the Schwarz-Zippel
lemma (see \cite{SZlemma}) we have
$$\Prob[ \, R(\alpha_j)=0 \, ] \le \frac{\deg R}{p}.$$
Applying Lemma 1(ii) we have $\deg R \le \deg \widehat A \deg \widehat B \le \deg A \deg B$.
So if the algorithm needs, say, $t$ images to interpolate $G$ modulo $p$,
then we can avoid unlucky evaluation points with high probability 
if we pick $p \gg t \deg A \deg B$.

But this is an upper bound -- a worst case bound for the GCD algorithm.
Researchers in computer algebra have observed that unlucky evaluation points are
rare in practice and that we ``never see them'' when testing algorithms on random inputs.
Theorems 2 and 3 give first results on the distribution of unlucky evaluation points.
In particular, for coprime $\widehat A$ and $\widehat B$ of positive degree,
Theorem 3 (page 11) implies $\Prob[ \, \alpha_j {\rm ~is~unlucky} \, ] < 1/p $.

\section{Results and Proofs}
Given a set $U$ and the finite collection of sets $\Gamma=\{A_{i},i=0,\ldots,n-1\}$
where each $A_{i}\subseteq U$, let us define $C_0 = U$, $C_{n+1}:=\emptyset$
and, for $1\leq k\leq n$,
\[
C_{k}:=\bigcup_{i_{1}<\cdots<i_{k}}(A_{i_{1}}\cap A_{i_{2}}\cdots\cap A_{i_{k}}).
\]
Then for $1\leq k\leq n$, $C_{k}$ is the union of all possible
intersections of the $k-$subsets of the collection $\Gamma$ .
In particular $C_1 = A_0 \cup A_1 \cup \cdots \cup A_{n-1}$ and  $C_n = A_0 \cap A_1 \cap \cdots \cap A_{n-1}$.
Let $B_{k}:=C_{k}-C_{k+1}$ for $0 \le k \le n$.
Observe that $C_{k}\supseteq C_{k+1}$, so $|B_{k}|=|C_{k}|-|C_{k+1}|$. Let
us also define
\[
b_{k}:=|B_{k}|\,\,\mathrm{and}\,\,t_{k}:=\sum_{i_{1}<\cdots<i_{k}}|A_{i_{1}}\cap A_{i_{2}}\cdots\cap A_{i_{k}}|.
\]
We have $t_1 = \sum_{i=0}^{n-1} |A_i|$ and $t_2 = \sum_{0\le i<j<n} |A_i \cap A_j|.$
We also have $b_{n}=t_{n}$ and $b_{n-1}=t_{n-1}-\binom{n}{1}b_{n}$.  Now
$A_{0}\cap A_{1}\cap\cdots\cap A_{n-1}$ is a subset of $\binom{n}{n-2}=\binom{n}{2}$
sets of the form $A_{i_{1}}\cap A_{i_{2}}\cap\cdots\cap A_{i_{n-2}}$
and each $(n-1)$-section $A_{i_{1}}\cap A_{i_{2}}\cap\cdots\cap A_{i_{n-1}}$
is a subset of $\binom{n-1}{n-2}=\binom{n-1}{1}$ sets of the form
$A_{i_{1}}\cap A_{i_{2}}\cap\cdots\cap A_{i_{n-2}}$ with $i_{1}<i_{2}<\cdots<i_{n-2}$.
Therefore $b_{n-2}=t_{n-2}-\binom{n-1}{1}b_{n-1}-\binom{n}{2}b_{n}$. 

Similarly, since each $(n-k+i)$-section is a subset of $\binom{n-k+i}{i}$
intersections of $(n-k)$ sets for $i=1,\ldots,k$, we have the recursive
formula
\begin{equation}
b_{n-k}=t_{n-k}-\sum_{i=1}^{k}{\binom{n-k+i}{i}}b_{n-k+i}\,\,\mathrm{for}\,\,k=0,\ldots,n.
\label{eq:recursive}
\end{equation}

\begin{lem}
Following the notation introduced above
\begin{equation}
b_{n-k}=\sum_{i=0}^{k}(-1)^{i}{\binom{n-k+i}{i}}t_{n-k+i}\,\,\mathrm{for}\,\,k=0,\ldots,n.
\label{eq:basic}
\end{equation}
\end{lem}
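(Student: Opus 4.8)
The plan is to derive the closed form \eqref{eq:basic} from the recurrence \eqref{eq:recursive} by induction on $k$. The base case $k=0$ is immediate: \eqref{eq:recursive} has an empty sum and gives $b_n = t_n$, while the right-hand side of \eqref{eq:basic} is the single term $\binom{n}{0}t_n = t_n$. For the inductive step I would assume \eqref{eq:basic} holds for the indices $n-k+1,\dots,n$ and substitute these expressions into the recurrence. Writing each $b_{n-k+i}=\sum_{j=0}^{k-i}(-1)^j\binom{n-k+i+j}{j}t_{n-k+i+j}$ for $1\le i\le k$ and inserting it into \eqref{eq:recursive}, I would then collect terms by the combined index $\ell=i+j$. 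The term $t_{n-k}$ occurs only once, with coefficient $1$, while for $1\le\ell\le k$ the coefficient of $t_{n-k+\ell}$ works out to $-\sum_{i=1}^{\ell}(-1)^{\ell-i}\binom{n-k+i}{i}\binom{n-k+\ell}{\ell-i}$.

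Matching this against the coefficient $(-1)^\ell\binom{n-k+\ell}{\ell}$ demanded by \eqref{eq:basic}, and absorbing the lone $t_{n-k}$ term as the $i=0$ summand, reduces the whole inductive step to a single binomial identity: writing $a=n-k$, it suffices to show
\[
\sum_{i=0}^{\ell}(-1)^i\binom{a+i}{i}\binom{a+\ell}{\ell-i}=0 \quad\text{for every } \ell\ge 1.
\]

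The crux is therefore this identity, which I would prove by passing to negative upper indices. Since $(-1)^i\binom{a+i}{i}=\binom{-a-1}{i}$, the sum becomes $\sum_{i=0}^{\ell}\binom{-a-1}{i}\binom{a+\ell}{\ell-i}$, and Vandermonde's convolution collapses it to $\binom{(-a-1)+(a+\ell)}{\ell}=\binom{\ell-1}{\ell}$, which is $0$ for $\ell\ge1$. I expect the bookkeeping—reindexing by $\ell=i+j$ and carrying the signs $(-1)^{\ell-i}$ through to the matched coefficient—to be the only delicate part; once the collapsed sum is recognized as a Vandermonde convolution in disguise, its vanishing is automatic, and feeding the identity back into the coefficient comparison closes the induction.
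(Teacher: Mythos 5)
Your proof is correct and follows essentially the same route as the paper's: strong induction on $k$, substituting the inductive hypothesis into the recurrence \eqref{eq:recursive} and matching the coefficient of each $t_{n-k+\ell}$ against \eqref{eq:basic}. The only real difference is how the resulting binomial sum is shown to vanish: the paper factors $\binom{n-d}{k-d+1}\binom{n}{d}=\binom{n}{k+1}\binom{k+1}{d}$ and invokes the (incomplete) alternating row sum, whereas you negate the upper index via $(-1)^i\binom{a+i}{i}=\binom{-a-1}{i}$ and collapse the sum by Vandermonde to $\binom{\ell-1}{\ell}=0$ --- a slightly cleaner finish that also handles all the coefficients uniformly instead of treating $c(t_n)$ first and then the general $c(t_{n-s})$.
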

\begin{proof}
We will prove the claim by strong induction on $k$. For $k=0$ we have $b_{n}=t_{n}.$
Now assume that the claim is true for any integer $i \leq k$ in place of $k$.

By the recursive formula (\ref{eq:recursive}) we have


\[
\textstyle
b_{n-(k+1)}=t_{n-(k+1)}-\binom{n-k}{1}b_{n-k}-\binom{n-k+1}{2}b_{n-k+1}-\cdots-\binom{n}{k+1}b_{n}.
\]

\noindent
On the other hand by induction we have the following equations

\bigskip

$b_{n}\:\:\:\ =t_{n}$

$b_{n-1}=t_{n-1}-\binom{n}{1}t_{n}$

$b_{n-2}=t_{n-2}-\binom{n-1}{1}t_{n-1}+\binom{n}{2}t_{n}$

$\vdots$

$b_{n-k}=t_{n-k}-\binom{n-k+1}{1}t_{n-k+1}+\cdots+(-1)^{k}\binom{n}{k}t_{n}.$
\\
\\
It follows that\\

$-\binom{n}{k+1}b_{n}\,\,\,\,\,\,=-\binom{n}{k+1}t_{n}$

$-\binom{n-1}{k}b_{n-1}=-\binom{n-1}{k}t_{n-1}+\binom{n-1}{k}\binom{n}{1}t_{n}$

$-\binom{n-2}{k-1}b_{n-2}=-\binom{n-2}{k-1}t_{n-2}+\binom{n-2}{k-1}\binom{n-1}{1}t_{n-1}-\binom{n-2}{k-1}\binom{n}{2}t_{n}$

$\vdots$

$-\binom{n-k}{1}b_{n-k}=-\binom{n-k}{1}t_{n-k}+\binom{n-k}{1}\binom{n-k+1}{1}t_{n-k+1}-\cdots(-1)^{k+1}\binom{n-k}{1}\binom{n}{k}t_{n}.$ 
\\
\\
If we sum all these equalities, then on the right hand side the coefficient of $t_{n}$ is \\
\\
 $c(t_{n})=\sum_{i=0}^{k}(-1)^{k-i+1}\binom{n-k+i}{i+1}\binom{n}{k-i}$.
For $d\leq k$ one has\\
\\
 $\binom{n-d}{k-d+1}\binom{n}{d}=\frac{(n-d)!}{(n-k-1)!(k-d+1)!}\frac{n!}{(n-d)!d!}=\frac{n!}{(k+1)!(n-k-1)!}\frac{(k+1)!}{d!(k-d+1)!}=\binom{n}{k+1}\binom{k+1}{d}$.
\\
\\
Then $c(t_{n})=\binom{n}{k+1}\sum_{i=0}^{k}(-1)^{k-i+1}\binom{k+1}{k-i}$$=-\binom{n}{k+1}(-1)^{k}=(-1)^{k+1}\binom{n}{k+1},$
\\
\\
where the last equality follows from the fact that \\
\\
$\binom{k+1}{0}-\binom{k+1}{1}+\binom{k+1}{2}+\cdots+(-1)^{k}\binom{k+1}{k}=-(-1)^{k+1}=(-1)^{k}$
. \\
\\
Similarly for $s=1,\ldots,k$ we have $c(t_{n-s})=\sum_{i=0}^{k-s}(-1)^{k-s-i+1}\binom{n-s-k+i}{i+1}\binom{n}{k-s-i}$
\\
\\
$=\binom{n-s}{k-s+1}\sum_{i=0}^{k-s}(-1)^{k-s-i+1}\binom{n}{k-s-i}=(-1)^{k-s+1}\binom{n-s}{k-s+1}.$\\
\\
Now plugging $s=k-i$  in the formula above we get\\ 
\\
$b_{n-(k+1)}=\sum_{i=0}^{k}(-1)^{i+1}\binom{n-k+i}{i+1}t_{n-k+i}$.
\end{proof}
%

\setcounter{thm}{0}

\begin{prop}
Following the same notation one has for $1\leq\ensuremath{k\leq n},$\\
$\ensuremath{\sum_{i=0}^{n}i^{k}b_{i}=\sum_{i=1}^{k}i^{k}\left[\sum_{j=i}^{k}(-1)^{j-i}{j \choose j-i}t_{j}\right]},$
In particular:

\begin{itemize}
\item[(a)] $\sum_{i=0}^{n}ib_{i}=t_{1}=\sum_{i=0}^{n-1}|A_{i}|$ \ (Inclusion Exclusion Principle) and

\item[(b)] $\sum_{i=0}^{n}i^{2}b_{i}=t_{1}+2t_{2}=\sum_{i=0}^{n-1}|A_{i}|+2\sum_{i<j}|A_{i}\cap A_{j}|.$ 
\end{itemize}
\end{prop}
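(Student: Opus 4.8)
The plan is to reduce both sides of the claimed identity to one common expression, namely $\sum_{j=1}^{k} D_j\,t_j$, where I set $D_j := \sum_{m=0}^{j}(-1)^{j-m}\binom{j}{m}m^{k}$. This $D_j$ is the $j$-th forward difference of the polynomial $x\mapsto x^{k}$ evaluated at $0$. The whole argument is a pair of reindexings glued together by one genuinely nontrivial fact: $D_j=0$ whenever $j>k$ (and $D_0=0^k=0$, since $k\ge 1$). Everything else is bookkeeping.

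First I would rewrite the inversion formula \eqref{eq:basic} in the cleaner shape $b_m=\sum_{l=m}^{n}(-1)^{l-m}\binom{l}{m}t_l$, obtained by putting $m=n-k$ and $l=m+i$ and using $\binom{m+i}{i}=\binom{l}{m}$. Multiplying by $m^{k}$, summing over $m$, and interchanging the order of summation then gives $\sum_{m=0}^{n}m^{k}b_m=\sum_{l=0}^{n}t_l\bigl(\sum_{m=0}^{l}(-1)^{l-m}\binom{l}{m}m^{k}\bigr)=\sum_{l=0}^{n}D_l\,t_l$. The $l=0$ term carries a factor $0^{k}=0$ and is discarded (so the possibly-undefined $t_0$ never matters), and the vanishing $D_l=0$ for $l>k$ truncates the sum to $\sum_{l=1}^{k}D_l\,t_l$. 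This is exactly the left-hand side of the Proposition.

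Next I would treat the right-hand side purely formally. Its index set is $1\le i\le j\le k$, so swapping the order of summation and using $\binom{j}{j-i}=\binom{j}{i}$ produces $\sum_{j=1}^{k}t_j\sum_{i=1}^{j}(-1)^{j-i}\binom{j}{i}i^{k}$. Adjoining the $i=0$ term (legitimate because $0^{k}=0$) converts the inner sum into $D_j$, whence the right-hand side equals $\sum_{j=1}^{k}D_j\,t_j$ as well. Comparing the two computations establishes the displayed formula. Parts (a) and (b) follow by specialization: a direct count gives $D_1=1$, so for $k=1$ one obtains $\sum_{i} i\,b_i=t_1$, and $D_2=2$, so for $k=2$ one obtains $\sum_{i} i^{2}b_i=t_1+2t_2$.

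The one step requiring actual justification is $D_j=0$ for $j>k$. I would prove it by the standard observation that $f\mapsto\sum_{m=0}^{j}(-1)^{j-m}\binom{j}{m}f(m)$ is the $j$-th forward difference $\Delta^{j}f(0)$, an operator that annihilates every polynomial of degree $<j$; since $x^{k}$ has degree $k<j$, this forces $D_j=0$. Equivalently one may note $D_j=j!\,S(k,j)$ counts surjections from a $k$-element set onto a $j$-element set, of which there are none once $j>k$. The only point to check carefully is that after the interchange and truncation every surviving index satisfies $1\le l\le k\le n$, so that each $t_l$ appearing is one of the quantities actually defined.
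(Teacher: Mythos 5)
Your proof is correct, and it follows the same overall skeleton as the paper's: substitute the inversion formula of Lemma 2 into $\sum_i i^k b_i$, interchange the order of summation, and show that the coefficient of $t_l$ vanishes for $l>k$. Where you genuinely diverge is in how that vanishing is established. The paper proves $\sum_{i=1}^{s}i^{k}\binom{s}{i}(-1)^{s-i}=0$ for $s>k$ from scratch, by strong induction on $k$ using the absorption identity $i\binom{s}{i}=s\binom{s-1}{i-1}$ and the binomial expansion of $(j+1)^{k}$; you instead recognize this quantity as $D_s=\Delta^{s}(x^{k})\big|_{x=0}=s!\,S(k,s)$ and invoke the standard facts that the $s$-th forward difference annihilates polynomials of degree below $s$, or equivalently that there are no surjections from a $k$-set onto a larger set. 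Your route is shorter and more conceptual, and it pays an extra dividend: the identification $D_1=1$, $D_2=2^{k}-2$ makes parts (a) and (b) immediate rather than requiring a separate computation. You also treat the right-hand side more carefully than the paper does (the paper's closing sentence about ``the coefficient of each $i^{s}$'' is terse), by showing both sides reduce to the common expression $\sum_{j=1}^{k}D_j t_j$, and you correctly dispose of the $l=0$ boundary term via $0^{k}=0$. The only cost is that you lean on the textbook fact $\Delta^{j}f(0)=0$ for $\deg f<j$ without proof; if you want the argument self-contained, a one-line remark that each application of $\Delta$ lowers the degree of a polynomial by at least one would close that loop and make your proof strictly preferable in economy to the paper's induction.
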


\begin{proof}
%
According to Lemma 2 we have \\

$b_{n}=t_{n}$

$b_{n-1}=t_{n-1}-\binom{n}{1}t_{n}$

$b_{n-2}=t_{n-2}-\binom{n-1}{1}t_{n-1}+\binom{n}{2}t_{n}$

$\vdots$

$b_{2}=t_{2}-\binom{3}{1}t_{3}+\binom{4}{2}t_{4}+\cdots
    +(-1)^{n-3} \binom{n-1}{n-3} t_{n-1} + (-1)^{n-2}\binom{n}{n-2}t_{n}$

$b_{1}=t_{1}-\binom{2}{1}t_{2}+\binom{3}{2}t_{3}+\binom{4}{3}t_{4}+\cdots
  + (-1)^{n-2}\binom{n-1}{n-2} t_{n-1} + (-1)^{n-1}\binom{n}{n-1}t_{n}$\\
\\
If we sum $\sum_{i=1}^{n}i^{k}b_{i}$, then for $1\leq s\leq n,$
the coefficient of $t_{s}$ on the right hand side is \\

$c(t_{s})=\sum_{i=1}^{s}i^{k}\binom{s}{s-i}(-1)^{s-i}.$  \\
\\
We claim that $c(t_s)=0$ for $k \! < \! s \! \leq  \! n$.
We prove this by strong induction on $k$.  For $k \! = \! 1$ we have \\

$c(t_{s})=\sum_{i=1}^{s}i\binom{s}{s-i}(-1)^{s-i}=\sum_{i=1}^{s}i\binom{s}{i}(-1)^{s-i}=\sum_{i=1}^{s}s\binom{s-1}{i-1}(-1)^{s-i}$
\\
\\
Since $s\geq2,$ by substituting $m=s-1\geq1$ and $j=i-1$ \\

$c(t_{s})=s\sum_{j=0}^{m}\binom{m}{j}(-1)^{m-j}=s(1-1)^{m}=0$.\\
\\
Now assume $\sum_{i=1}^{s}i^{l}\binom{s}{s-i}(-1)^{s-i}=0$
for any $1\leq l\leq k$ and $l+1\leq s\leq n.$ Then \\

$c(t_{s})=\sum_{i=1}^{s}i^{k+1}\binom{s}{s-i}(-1)^{s-i}=\sum_{i=1}^{s}i^{k+1}\binom{s}{i}(-1)^{s-i}=\sum_{i=1}^{s}si^{k}\binom{s-1}{i-1}(-1)^{s-i}.$\\
\\
Substituting $m=s-1\geq l\geq1$ and $j=i-1$ we obtain \\

$c(t_{s})=s\sum_{j=0}^{m}(j+1)^{k}\binom{m}{j}(-1)^{m-j}=s\sum_{j=0}^{m}\sum_{l=0}^{k}\binom{k}{l}j^{l}\binom{m}{j}(-1)^{m-j}$\\

\hspace*{7mm} $=s\sum_{l=0}^{k}\binom{k}{l}\sum_{j=0}^{m}j^{l}\binom{m}{j}(-1)^{m-j}.$\\
\\
Since $m=s-1\geq l$, we have $m\geq l+1$ and by induction hypothesis
each summand\\

$\sum_{j=0}^{m}j^{l}\binom{m}{j}(-1)^{m-j}=\sum_{j=1}^{m}j^{l}\binom{m}{m-j}(-1)^{m-j}=0.$ \\
\\
Hence $c(t_{s})=0.$
On the other hand for $1\leq s\leq k$ the coefficient of each $i^{s}$
on the right hand side is $\sum_{j=i}^{k}(-1)^{j-i}{j \choose j-i}t_{j}$.
Hence we have the result.
In particular, for $k=2$ the non-zero terms on the right-hand-side are
$t_{1}-\binom{2}{1} t_2 + 2^{2}t_{2}=t_{1}+2t_{2}$.
 \end{proof}

\setcounter{thm}{0}
\newpage
\begin{thm}
Let $\phi(n) = |\{ \, 1 \! \le \! i \! \le \! n : \gcd(i,n)=1 \}|$ denote Euler's totient function.
Let $X$ be a random variable which counts the number of distinct roots of a monic
polynomial in $\ZZ_n[x]$ of degree $m>0.$  Then \vspace*{-1mm}
\begin{itemize}
\item[(a)] ${\rm E}[X]=1$ \ and \\ \vspace*{-2mm}
\item[(b)] if $m=1$ then $\mathrm{Var}[X]=0,$ otherwise 
$\mathrm{Var}[X]=\sum_{d|n,d\neq n}\frac{d}{n}\phi(\frac{n}{d})=\sum_{d|n}\frac{d-1}{n}\phi(\frac{n}{d})$. \\
In particular, if $n=p^{k}$
where $p$ is a prime number and $k\geq1$, $\mathrm{Var}[X]=k(1-1/p).$
\end{itemize}
\end{thm}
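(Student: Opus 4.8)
The plan is to apply Proposition 1 with $U$ the set of all $n^m$ monic polynomials of degree $m$ in $\ZZ_n[x]$ and, for each $\alpha \in \ZZ_n = \{0,1,\dots,n-1\}$, the set $A_\alpha = \{ f \in U : f(\alpha) = 0 \}$. With these choices the random variable $X$ counting distinct roots is exactly the number of sets $A_\alpha$ containing $f$, so the polynomials having precisely $i$ distinct roots form the set $B_i$ of Proposition 1 and $\Prob[X=i] = b_i/n^m$. Hence $\E[X] = \frac{1}{n^m}\sum_i i\,b_i$ and $\E[X^2] = \frac{1}{n^m}\sum_i i^2 b_i$, and Proposition 1 lets me replace these sums by $t_1$ and $t_1 + 2t_2$ without ever computing the individual $b_i$.

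For part (a) I would first show $|A_\alpha| = n^{m-1}$ for every $\alpha$. Writing a monic $f$ as $x^m + a_{m-1}x^{m-1} + \cdots + a_0$, the value $f(\alpha)$ is an affine function of $a_0$ with unit coefficient, so for each of the $n^{m-1}$ choices of $a_1,\dots,a_{m-1}$ there is exactly one $a_0$ with $f(\alpha)=0$. Thus $t_1 = \sum_{\alpha} |A_\alpha| = n \cdot n^{m-1} = n^m$, and Proposition 1(a) gives $\E[X] = t_1/n^m = 1$.

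For part (b), using $\E[X]=1$ and Proposition 1(b), $\Var[X] = \E[X^2] - 1 = (t_1 + 2t_2)/n^m - 1 = 2t_2/n^m$, so everything reduces to computing $t_2 = \sum_{\alpha<\beta} |A_\alpha \cap A_\beta|$. This is the main obstacle, and it is exactly where the ring $\ZZ_n$ differs from a field: $f(\alpha)=f(\beta)=0$ no longer forces $(x-\alpha)(x-\beta)\mid f$. Instead I would divide, writing $f = (x-\alpha)q$ with $q$ monic of degree $m-1$ (valid since $x-\alpha$ is monic), so that $f(\beta)=0$ becomes $(\beta-\alpha)q(\beta)\equiv 0 \pmod n$. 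Setting $g = \gcd(\beta-\alpha,n)$ and cancelling the unit part, this is equivalent to $q(\beta)\equiv 0 \pmod{n/g}$. Since (for $m\ge 2$) $q(\beta)$ is uniformly distributed over $\ZZ_n$ as $q$ ranges over monic degree-$(m-1)$ polynomials, again by varying the constant term, exactly $g$ of the $n$ residues are admissible and each is hit $n^{m-2}$ times, giving $|A_\alpha \cap A_\beta| = g\,n^{m-2} = \gcd(\alpha-\beta,n)\,n^{m-2}$. The degenerate case $m=1$ has $X\equiv 1$ and $\Var[X]=0$.

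It then remains to sum. Passing to ordered pairs (each unordered pair counted twice) and grouping by the difference $k \equiv \alpha - \beta \pmod n$, each nonzero $k$ being realized by exactly $n$ pairs, I get $2t_2 = n^{m-2}\sum_{\alpha\ne\beta}\gcd(\alpha-\beta,n) = n^{m-1}\sum_{k=1}^{n-1}\gcd(k,n)$. Pillai's identity $\sum_{k=1}^n \gcd(k,n) = \sum_{d\mid n} d\,\phi(n/d)$, proved by grouping the $k$ according to $\gcd(k,n)=d$ of which there are $\phi(n/d)$, then gives $\sum_{k=1}^{n-1}\gcd(k,n) = \sum_{d\mid n,\,d\ne n} d\,\phi(n/d)$, since the omitted $d=n$ term equals $n$. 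Hence $\Var[X] = 2t_2/n^m = \frac{1}{n}\sum_{d\mid n,\,d\ne n} d\,\phi(n/d) = \sum_{d\mid n,\,d\ne n}\frac{d}{n}\phi(n/d)$, the first claimed formula. The second form follows from $\sum_{d\mid n}\phi(n/d)=n$, and for $n=p^k$ each of the $k$ divisors $d=p^j$ with $0\le j\le k-1$ contributes $\frac{p^j}{p^k}\phi(p^{k-j}) = 1-1/p$, yielding $k(1-1/p)$.
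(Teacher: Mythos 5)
Your proof is correct and follows essentially the same route as the paper: Proposition 1 applied to the sets $A_\alpha$, with $|A_\alpha|=n^{m-1}$ and $|A_\alpha\cap A_\beta|=\gcd(\alpha-\beta,n)\,n^{m-2}$ (obtained by writing $f=(x-\alpha)q$ and counting admissible constant terms of $q$) as the key counts. The only cosmetic difference is in the final summation, where you pass to ordered pairs and invoke the identity $\sum_{k=1}^{n-1}\gcd(k,n)=\sum_{d\mid n,\,d\neq n}d\,\phi(n/d)$, while the paper groups the pairs by $d=\gcd(\beta-\gamma,n)$ and evaluates the same sum via the symmetry $\alpha\mapsto n-\alpha$.
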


\noindent
{\bf Remark 1.} \ 
We found this result by direct computation and using the
Online Encylopedia of Integer Sequences (OEIS) see \cite{OEIS}.
For polynomials of degree 2,3,4,5 in $\ZZ_n[x]$ we computed ${\rm E}[X]$
and ${\rm Var}[X]$ for $n=2,3,4,\dots,20$ using Maple and found 
that ${\rm E}[X]=1$ in all cases.
Values for the variance are given in the table below.
\medskip

\begin{tabular}{|l|ccccccccccccccc|} \hline
$n$              &  2 &  3  & 4 &  5  & 6  & 7  & 8  &  9  &  10  &  11   &  12  &  13  &  14 &  15  &  16 \\
Var[$X$]         & $\frac{1}{2}$ & $\frac{2}{3}$ & 1 & $\frac{4}{5}$&$\frac{3}{2}$&$\frac{6}{7}$&$\frac{3}{2}$& $\frac{4}{3}$&$\frac{17}{10}$& $\frac{10}{11}$& $\frac{7}{3}$ & $\frac{12}{13}$ & $\frac{25}{14}$ & 2 & 2 \\
$a(n)$           &  1 &  2  & 4 &  4  & 9  & 6  & 12 &  12 &  17  &  10   &  28  &  12  &  25 &  30  &  32 \\ \hline
\end{tabular}

\medskip
\noindent
When we first computed ${\rm Var}[X]$ we did not recognize the numbers.
Writing ${\rm Var}[X] = a(n)/n$ we computed the sequence for $a(n)$ (see the table)
and looked it up in the OEIS.  We found it is sequence A006579 and
that $a(n) = \sum_{k=1}^{n-1} \gcd(n,k)$.  The OEIS also has the formula
$a(n) = \sum_{d|n} (d-1) \phi(\frac{n}{d}).$

\begin{proof} \quad 
Let $A_{i}$ be the set of all monic univariate polynomials of degree
$m>0$ which have a root at $\alpha_{i}\in\mathbb{Z}_{n}$. Then since
$x-\alpha_i$ is monic, for any $f\in A_{i}$ we have $f=(x-\alpha_{i})q$
for a unique $q\in \ZZ_{n}[x]$ and we have $n^{m-1}$ choices for such
an $f$. Hence $|A_{i}|=n^{m-1}.$ 

Let $x_{i}:=\mathrm{Prob}[X=i]$. This is the probability that
$f$ has exactly $i$ distinct roots, i.e. $f\in B_{i}$ in the notation
introduced in section 1 considering the finite collection of sets
$\Gamma=\{A_{i},i=0,\ldots,n-1\}$. Since we have $n^{m-1}$ choices
for a monic polynomial of degree $m$ in $\ZZ_{n}[x]$ we have $x_{i}=\frac{b_{i}}{n^{m-1}}$.
Then by Proposition 1
\[
{\rm E}[X]=\sum_{i=0}^{n}ix_{i}=\sum_{i=0}^{n}i\frac{b_{i}}{n^{m}}=\frac{\sum_{i=0}^{n}ib_{i}}{n^{m}}=\frac{\sum_{i=0}^{n-1}|A_{i}|}{n^{m}}=\frac{\sum_{i=0}^{n-1}n^{m-1}}{n^{m}}=\frac{nn^{m-1}}{n^{m}}=1
\]

\noindent
To prove (b), if $m=1$ then $f = x-\alpha$ for
some $\alpha\in \ZZ_n$ and hence $X=1$ and $\Var[X]=0$.
For $m>1$ and $\alpha\in\mathbb{Z}_{n}^{*}$, our first aim
is to find $|A_{0}\cap A_{\alpha}|$. Let $f\in A_{0}\cap A_{\alpha}$.
It may not be the case that $f=x(x-\alpha)q$ for a unique $q\in\mathbb{Z}_{n}[x]$,
since $\mathbb{Z}_{n}[x]$ is not a unique factorization domain in
general. However $f=xq_{1}=(x-\alpha)q_{2}$ for unique $q_{1},q_{2}\in\mathbb{Z}_{n}[x]$.
It follows that $\alpha q_{2}(0)=0\,\,\mathrm{mod}\,\,n.$
If $\gcd(\alpha,n)=d$ then $\gcd(\frac{\alpha}{d},\frac{n}{d})=1$
and hence $q_{2}(0)=0\,\,\mathrm{mod}\,\,\frac{n}{d}$. The general form
of $q_{2}=x^{m-1}+a_{m-2}x^{m-2}+\cdots+a_{0}$ where $a_{i}\in\mathbb{Z}_{n}$
for $i=0,\ldots,m-2$. Since $q_{2}(0)=a_{0}\:\;\mathrm{mod}\:\;\frac{n}{d},$
there are $d$ choices for $a_{0}$ and hence there are $dn^{m-2}$
choices for $q_{2}.$ Therefore $|A_{0}\cap A_{\alpha}|=dn^{m-2}.$

For a given pair $(\gamma,\beta)$ with $\beta>\gamma,$ to compute
$|A_{\gamma}\cap A_{\beta}|$, define $\alpha:=\beta-\gamma$ and
consider $A_{0}\cap A_{\alpha}.$ If $f\in A_{\gamma}\cap A_{\beta},$
then we have $f(x)=(x-\gamma)q_{3}(x)=(x-\beta)q_{4}(x)$ for unique
$q_{3},q_{4}\in \ZZ_{n}[x]$. By the coordinate translation $x\mapsto x+\gamma$
we have $f(x+\gamma)\in A_{0}\cap A_{\alpha}$, since $f(x+\gamma)=xq_{3}(x+\gamma)=(x-\alpha)q_{4}(x+\gamma)$
where $f(x+\gamma),q_{3}(x+\gamma),q_{4}(x+\gamma)$ are monic
and with the same degree before the translation. This correspondence
is bijective and it follows that $|A_{\gamma}\cap A_{\beta}|=|A_{0}\cap A_{\alpha}|=dn^{m-2}.$ 

Let $d=\gcd(\alpha,n).$ There are $k=\phi(\frac{n}{d})$
elements $\beta_{1},\ldots,\beta_{k}$ in $\mathbb{Z}_{\frac{n}{d}}$
such that $\mathrm{gcd}(\beta_{j},\frac{n}{d})=1.$
If we define $\alpha_{j}:=d\beta_{j}\in\mathbb{Z}_{n}$
then $\mathrm{gcd}(\alpha_{j},n)=d.$ For, if $s=\mathrm{gcd}(\alpha_{j},n)$
and $d|s$ then $s|\alpha_{j}\Rightarrow s|d\beta_{j}\Rightarrow\frac{s}{d}|\beta_{j}$
and $\frac{s}{d}|\frac{n}{d}\Rightarrow\frac{s}{d}|\mathrm{gcd}(\beta_{j},\frac{n}{d})\Rightarrow\frac{s}{d}|1\Rightarrow s=d.$
Now, for each $j$ consider the $n-\alpha_{j}$ pairs of the form
$(i,i+\alpha_{j})$ where $i=0,\ldots,n-\alpha_{j}-1$. We have $|A_{i}\cap A_{i+\alpha_{j}}|=|A_{0}\cap A_{\alpha_{j}}|$
and 
\[
\sum_{\beta>\gamma,d=\mathrm{gcd}(\beta-\gamma,n)}|A_{\gamma}\cap A_{\beta}|=\sum_{j=1}^{k}(n-\alpha_{j})|A_{0}\cap A_{\alpha_{j}}|=\sum_{j=1}^{k}(n-\alpha_{j})dn^{m-2}=dn^{m-2}\sum_{j=1}^{k}n-\alpha_{j}
\]
where $d=\mathrm{gcd}(\alpha_{j},n)$ and $k=\phi(\frac{n}{d}).$
Since $\mathrm{gcd}(n,\alpha_{j})=d \iff \mathrm{gcd}(n,n-\alpha_{j})=d$
we have $\sum_{j=1}^{k}n-\alpha_{j}=\sum_{j=1}^{k}\alpha_{j}$. Then
\[
2\sum_{j=1}^{k}\alpha_{j}=\sum_{j=1}^{k}\alpha_{j}+\sum_{j=1}^{k}n-\alpha_{j}=\sum_{j=1}^{k}n=kn=\phi(\frac{n}{d})n
~~ \Longrightarrow ~~
\sum_{j=1}^{k}\alpha_{j}=\frac{n}{2}\phi(\frac{n}{d}).
\]
It follows that
\[
\sum_{\beta>\gamma,d=\mathrm{gcd}(\beta-\gamma,n)}|A_{\gamma}\cap A_{\beta}|=dn^{m-2}\sum_{j=1}^{k}n-\alpha_{j}=dn^{m-2}\sum_{j=1}^{k}\alpha_{j}=\frac{n}{2}\phi(\frac{n}{d})dn^{m-2}.
\]
Then by Proposition 1 it follows that 
\begin{eqnarray*}
\mathrm{Var}[X] & = & \mathrm{E}[X^{2}]-\mathrm{E}[X]^{2}=-1^2+\mathrm{E}[X^{2}]\\
 & = & -1+\sum_{i=0}^{n}i^{2}x_{i}=-1+\sum_{i=0}^{n}i^{2}\frac{b_{i}}{n^{m}}=-1+\frac{\sum_{i=0}^{n}i^{2}b_{i}}{n^{m}}\\
 & = & -1+\frac{\sum_{i=0}^{n-1}|A_{i}|+2\sum_{i<j}|A_{i}\cap A_{j}|}{n^{m}}\\
 & = & -1+\frac{nn^{m-1}}{n^{m}}+\frac{2\sum_{d|n\,d\neq n}\frac{n}{2}\phi(\frac{n}{d})dn^{m-2}}{n^{m}}\\
 & = & 2\sum_{d|n\,d\neq n}\frac{n}{2}\phi(\frac{n}{d})dn^{-2}
 ~ = ~ \sum_{d|n\,d\neq n}\frac{d}{n}\phi(\frac{n}{d}).
\end{eqnarray*}
Also, since by Gauss' Lemma $\sum_{d|n}\phi(\frac{n}{d}) = n$ we have
\begin{eqnarray*}
\sum_{d|n}\frac{d-1}{n}\phi(\frac{n}{d}) & = & \sum_{d|n}\frac{d}{n}\phi(\frac{n}{d})-\frac{1}{n}\sum_{d|n}\phi(\frac{n}{d})\\
 & = & \phi(1)+\sum_{d|n,d\neq n}\frac{d}{n}\phi(\frac{n}{d})-\frac{1}{n} n 
 ~ = ~ \sum_{d|n,d\neq n}\frac{d}{n}\phi(\frac{n}{d}).
\end{eqnarray*}
To prove the last claim, let $n=p^{k}$ where $p$ is a prime number
and $k\geq1$. Then
\begin{eqnarray*}
\sum_{d|n,d\neq n}\frac{d}{n}\phi(\frac{n}{d}) & = & \sum_{s=0}^{k-1}\frac{p^{s}}{p^{k}}\phi(\frac{p^{k}}{p^{s}})=\sum_{s=0}^{k-1}p^{s-k}p^{k-s-1}(p-1)=k(1-1/p).
\end{eqnarray*}
\end{proof}

\begin{thm}

Let $f,g$ be polynomials in $\FF_q[x,y]$ of the form
$f = c_n x^n + \sum_{i=0}^{n-1} \sum_{j=0}^{n-i} c_{ij} x^i y^j$ and
$g = d_m x^m + \sum_{i=0}^{m-1} \sum_{j=0}^{m-i} d_{ij} x^i y^j$ with
$c_n \ne 0$ and $d_m \ne 0$, thus of total degree $n$ and $m$ respectively.
Let $X$ be a random variable that counts the number of $\gamma\in\FF_{q}$
such that $\mathrm{gcd}(f(x,\gamma),g(x,\gamma))\neq 1$.
If $n>0$ and $m>0$ then


\begin{itemize}
\item[(a)] $\mathrm{E}[X] = 1$ and \vspace*{-1mm}
\item[(b)] $\mathrm{Var}[X] = 1-1/q.$ 
\end{itemize}
\end{thm}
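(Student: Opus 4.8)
The plan is to mirror the proof of Theorem 1 by applying Proposition 1 to the family indexed by the $q$ evaluation points. For each $\gamma\in\FF_q$ let $A_\gamma$ be the set of pairs $(f,g)$ in the given family for which $\gcd(f(x,\gamma),g(x,\gamma))\neq 1$; equivalently, by Lemma 1(iii)--(iv), for which $\bar f_\gamma:=f(x,\gamma)$ and $\bar g_\gamma:=g(x,\gamma)$ share a root. Then $X$ counts how many of the $q$ sets $A_\gamma$ contain the random pair $(f,g)$, so with $U$ the whole family and $x_i=\Prob[X=i]=b_i/|U|$, Proposition 1(a)--(b) give $\E[X]=t_1/|U|=\sum_\gamma\Prob[A_\gamma]$ and $\E[X^2]=(t_1+2t_2)/|U|=\E[X]+2\sum_{\gamma<\gamma'}\Prob[A_\gamma\cap A_{\gamma'}]$. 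Thus everything reduces to one- and two-point probabilities.

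For part (a) I would first pin down the distribution of $\bar f_\gamma$ for fixed $\gamma$. Since $c_n\neq 0$ is a constant, $\bar f_\gamma$ has degree exactly $n$ in $x$, and for $0\le i\le n-1$ its coefficient of $x^i$ is the linear form $\sum_{j=0}^{n-i}c_{ij}\gamma^j$. Because this form contains the free variable $c_{i0}$ with coefficient $\gamma^0=1$, and the coefficient blocks for distinct $i$ are disjoint, these $n$ coefficients are independent and uniform over $\FF_q$; dividing by $c_n$ shows $\bar f_\gamma/c_n$ is uniform over monic polynomials of degree $n$, and independently $\bar g_\gamma/d_m$ is uniform monic of degree $m$. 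It then remains to quote (or prove, via the generating function identity $\sum_{n,m}P_{n,m}u^nv^m=\frac{1-quv}{(1-qu)(1-qv)}$, where $P_{n,m}$ counts coprime monic pairs) the classical fact that two independent uniform monic polynomials of positive degrees are coprime with probability exactly $1-1/q$. Hence $\Prob[A_\gamma]=1/q$ for every $\gamma$ and $\E[X]=q\cdot(1/q)=1$.

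The heart of the argument, and the step I expect to be the main obstacle, is showing that for $\gamma\neq\gamma'$ the events $A_\gamma$ and $A_{\gamma'}$ are independent. Since $f$ and $g$ use disjoint coefficients, it suffices to treat $f$ and $g$ separately and show, for each, that its monic images at the two points are independent. For fixed $i\le n-1$ the pair of $x^i$-coefficients of $\bar f_\gamma$ and $\bar f_{\gamma'}$ is the image of $(c_{i0},\dots,c_{i,n-i})$ under the $2\times(n-i+1)$ matrix with rows $(1,\gamma,\dots,\gamma^{n-i})$ and $(1,\gamma',\dots,(\gamma')^{n-i})$; as $n-i\ge 1$ this matrix has at least two columns and its leading $2\times2$ minor is the Vandermonde determinant $\gamma'-\gamma\neq 0$, so the map is onto $\FF_q^2$ and the coefficient pair is uniform on $\FF_q^2$, independently across $i$. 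Conditioning on the shared leading coefficient $c_n$ and rescaling (a bijection of $\FF_q^2$) then shows $\bar f_\gamma/c_n$ and $\bar f_{\gamma'}/c_n$ are independent uniform monic polynomials of degree $n$; the same holds for $g$, and combining the two independent blocks gives $(\bar f_\gamma,\bar g_\gamma)\perp(\bar f_{\gamma'},\bar g_{\gamma'})$. Since $A_\gamma$ depends only on $(\bar f_\gamma,\bar g_\gamma)$ up to nonzero scaling, independence of the events follows, whence $\Prob[A_\gamma\cap A_{\gamma'}]=(1/q)^2$.

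Finally I would assemble the variance. From the two-point computation $\sum_{\gamma<\gamma'}\Prob[A_\gamma\cap A_{\gamma'}]=\binom{q}{2}q^{-2}=(q-1)/(2q)$, so $\E[X^2]=1+2\cdot(q-1)/(2q)=1+(q-1)/q$ and $\Var[X]=\E[X^2]-\E[X]^2=(q-1)/q=1-1/q$, which is (b). The only places where the hypotheses really enter are $c_n,d_m\neq 0$, which force the images to have the full degrees $n,m$ (and hence make the one-point probability exactly $1/q$), and the total-degree shape of $f,g$, which guarantees at least two coefficients $c_{ij}$ for every $x$-power below the leading one and thus supplies the rank-$2$ Vandermonde needed for two-point independence.
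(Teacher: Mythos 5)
Your proposal is correct and follows the same overall strategy as the paper: reduce everything to the one-point probabilities $\Prob[A_\gamma]$ and the two-point probabilities $\Prob[A_\gamma\cap A_{\gamma'}]$ via Proposition 1, and feed in the classical fact that two independent uniform monic polynomials of positive degree over $\FF_q$ are coprime with probability exactly $1-1/q$. Where you differ is in how those probabilities are computed. The paper fixes a non-coprime image pair $(h_i,l_i)$, counts preimages under evaluation ($q^{d}$ choices for the coefficient $c_{n-d}(y)$ given its value at $0$, and $q^{d-1}$ given its values at $0$ and $1$), and then transports these counts from the special points $0$ and $(0,1)$ to arbitrary $\gamma$ and $(\gamma,\theta)$ by the coordinate changes $y\mapsto y-\gamma$ and $y\mapsto(y-\gamma)/(\theta-\gamma)$. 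You instead observe directly that the evaluation maps are surjective linear maps on the disjoint coefficient blocks --- the single form $\sum_j c_{ij}\gamma^j$ always contains $c_{i0}$ with coefficient $1$, and for two distinct points the $2\times(n-i+1)$ matrix has the nonzero Vandermonde $2\times2$ minor $\gamma'-\gamma$ because $n-i\ge 1$ --- so the images are uniform and, at distinct points, independent. This is the same fiber-size computation in disguise, but it removes the need for the coordinate transformations and makes explicit exactly where the total-degree shape of $f$ and $g$ (the presence of both $c_{i0}$ and $c_{i1}$ below the leading term) is used; the paper's transformation-based version, on the other hand, is the one that generalizes to the multivariate setting of Theorem 3, where the analogous affine change of coordinates has to be constructed explicitly. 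Your handling of the non-monic case by conditioning on $c_n$, $d_m$ and rescaling matches the paper's ``without loss of generality monic'' reduction, and your final assembly of $\E[X^2]=1+(q-1)/q$ and $\Var[X]=1-1/q$ is the same computation.
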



\noindent
{\bf Remark 2.} \ We found this result by computation.
For quadratic polynomials $f,g$ of the form
$f = x^2 + (a_1 y + a_2) x + a_3 y^2 + a_4 y + a_5$
and 
$g = x^2 + (b_1 y + b_2) x +  b_3 y^2 + b_4 y + b_5$
over finite fields of size $q=2,3,4,5,8,9,11$ we generated
all $q^{10}$ pairs and computed
$X = \left| \{ \alpha \in \FF_q : \gcd(f(x,\alpha),g(x,\alpha)) \ne 1 \} \right|.$
Magma code for $\FF_4$ is given in Appendix A.
We repeated this for cubic polynomials and some higher degree bivariate polynomials
for $q=2,3$ to verify that ${\rm E}[X]=1$ and ${\rm Var}[X]=1-1/q$
holds more generally.  
For yet higher degree polynomials we used random samples.
That ${\rm E}[X]=1$ independent of the degrees of $f$ and $g$ was a surprise to us.
We had expected a logarithmic dependence on the degrees of $f$ and $g$.

\medskip \noindent
{\bf Proof:} \quad
Without loss of generality we may assume $f$ and $g$ are monic in $x$ because
$\gcd(f(x,\gamma),g(x,\gamma)) = 1 \iff \gcd( c_n^{-1} f(x,\gamma), d_m^{-1} g(x,\gamma) ) = 1$.
For $\gamma\in\FF_{q},$ let us define $A_{\gamma}$ as the
set of polynomial pairs $(f,g)\in\mathbb{F}_{q}[x,y]^{2}$ where $f,g$
are monic in $x$ with total degrees, $\mathrm{deg}(f)=n>0$ and $\mathrm{deg}(g)=m>0$
such that $\mathrm{gcd}(f(x,\gamma),g(x,\gamma))\neq1.$ Our first
aim is to compute $|A_{0}|$.

Let $(f,g)\in A_{0}$. Since $f$ and $g$ are monic in $x$, $f(x,0),g(x,0)$
are monic polynomials of degree $n$ and $m$ respectively in $\mathbb{F}_{q}[x].$
We have finitely many choices, say $s,$ for non-relatively prime
monic polynomial pairs $(h_{i}(x),l_{i}(x))$ with $\mathrm{deg}(h_{i})=n$
and $\mathrm{deg}(l_{i})=m$ with $i=1,\ldots,s$$ $ in $\mathbb{F}_{q}[x]^{2}.$
Let $(f(x,0),g(x,0))=(h_{i}(x),l_{i}(x))$ for some fixed $i$ where
$1\leq i\leq s$. In fact $s=(q^{n}q^{m})/q=q^{n+m-1}$, since there
are $q^{n}q^{m}$ possible choices for monic polynomial pairs $(h,l)$
in $\mathbb{F}_{q}[x]$ with $\mathrm{deg}(h)=n$, $\mathrm{deg}(l)=m$
and the probability of a given monic pair is non-relatively prime
over $\mathbb{F}_{q}[x]$ is $1/q$ (see \cite{panario,berlekamp} and also
\cite{benjamin} for an accessible proof).  

Let $f(x,y)=x^{n}+c_{n-1}(y)x^{n-1}+\cdots+c_{1}(y)x+c_{0}(y)$
where $c_{d}(y)\in\mathbb{F}_{q}[y]$ of total degree $\mathrm{deg}(c_{n-d}(y))\leq d$
and let $c_{n-d}(y)=a_{d}^{(n-d)}y^{d}+\cdots+a_{0}^{(n-d)}$
where $a_{i}^{(n-d)}\in\mathbb{F}_{q}$. 

Let $h_{i}(x)=x^{n}+\alpha_{n-1}^{(i)}x^{n-1}+\cdots+\alpha_{0}^{(i)}$
with $\alpha_{v}^{(i)}\in\mathbb{F}_{q}$ for $0\leq v\leq n-1$.
Then for $1\leq d\leq n$, we have $c_{n-d}(0)=a_{0}^{(n-d)}=\alpha_{n-d}^{(i)}.$
It follows that there are $q^{d}$ choices for such $c_{n-d}(y)$
and hence there are $q^{1}q^{2}\cdots q^{n}=q^{n(n+1)/2}$ choices
for such $f(x,y)$. Similarly there are $q^{m(m+1)/2}$ choices for
$g(x,y)$. Let us denote these numbers as $D=q^{n(n+1)/2}$ and $R=q^{m(m+1)/2}$.
Since we have $s$ choices for $i$, $|A_{0}|=sDR$.

On the other hand for a given $\gamma\in\mathbb{F}_{q}$ if $(f(x,y),g(x,y))\in A_{0}$
then $(f(x,y-\gamma),g(x,y-\gamma))\in A_{\gamma},$ since $f(x,y-\gamma)$
is again a bivariate polynomial which is a monic polynomial in $x$
of total degree $n$ and $g(x,y-\gamma)$ is again a bivariate polynomial
which is monic polynomial in $x$ of total degree $m$. This correspondence
(coordinate transformation) is bijective. Hence for any $\gamma\in\mathbb{F}_{q},$
one has $|A_{\gamma}|=sDR.$ 

For a general polynomial $f(x,y)\in\mathbb{F}_{q}[x,y]$ which is
monic in $x$ and of total degree $n>0$, one has $q^{2}q^{3}\cdots q^{n+1}=q^{n}D$
choices. Similarly for a general polynomial $g(x,y)\in\mathbb{F}_{q}[x,y]$
which is monic in $x$ and of total degree $m>0$, one has $q^{2}q^{3}\cdots q^{m+1}=q^{m}R$
choices and therefore there are $q^{n+m}DR$
pairs $(f,g)$ which are monic in $x$ with total degrees $\mathrm{deg}(f)=n$
and $\mathrm{deg}(g)=m$.

Let $x_{i}:=\mathrm{Prob}[X=i]$. This is the probability that
$\mathrm{gcd}(f(x,\gamma),g(x,\gamma))\neq1$ for exactly $i$ different
$\gamma$'s in $\mathbb{F}_{q},$ i.e.  the probability that $(f,g)\in B_{i}$
in the notation introduced in section 1 considering the finite collection
of sets 
$\Gamma=\{A_{\gamma},\gamma \in \FF_q\}$.
Hence $x_{i}=\frac{b_{i}}{q^{n+m}DR}$.
Then by Proposition 1 
\begin{eqnarray*}
\mathrm{E}[X] & = & \sum_{i=0}^{q}ix_{i}=\sum_{i=0}^{q}i\frac{b_{i}}{q^{n+m}DR}=\frac{\sum_{i=0}^{q}ib_{i}}{q^{n+m}DR}\\
 & = & \frac{\sum_{i=0}^{q-1}|A_{i}|}{q^{n+m}DR}=\frac{\sum_{i=0}^{q-1}sDR}{q^{n+m}DR}=\frac{qsDR}{q^{n+m}DR}=\frac{qq^{n+m-1}}{q^{n+m}}=1.
\end{eqnarray*}
To determine the variance of $X$, our proof assumes a set ordering
of the elements of $\FF_q.$  For this purpose let us fix a
generator $\alpha$ of $\FF_q^*$ and use the ordering
$0 < 1 < \alpha < \alpha^2 < \dots < \alpha^{q-2}$.

For $(\gamma,\theta) \in \FF_q^2$ with $\gamma<\theta$,
let us define $A_{\gamma,\theta}$ as the set
of bivariate polynomial pairs $(f,g)$ with $f,g$ are monic in $x$
with total degrees, $\mathrm{deg}(f)=n>0$ and $\mathrm{deg}(g)=m>0$
such that $\mathrm{gcd}(f(x,\gamma),g(x,\gamma))\neq1$ and $\mathrm{gcd}(f(x,\theta),g(x,\theta))\neq1$.
Our first aim is to compute $|A_{0,1}|.$ 

Let $f,g\in A_{0,1}$. Since $f$ and $g$ are monic in $x$, $f(x,0),f(x,1)$
are monic polynomials of degree $n$ and $g(x,0),g(x,1)$ are monic
polynomials of degree $m$ in $\mathbb{F}_{q}[x].$ We have finitely
many choices for non-relatively prime monic polynomial pairs $(h_{i}(x),l_{i}(x))$
with $\mathrm{deg}(h_{i})=n$ and $\mathrm{deg}(l_{i})=m$ with $i=1,\ldots,s$$ $
in $\mathbb{F}_{q}[x]^{2}.$

Let $(f(x,0),g(x,0))=(h_{i}(x),l_{i}(x))$ and $(f(x,1),g(x,1))=(h_{j}(x),l_{j}(x))$
for some fixed pair $(i,j)$ where $1\leq i,j\leq s$. 

Let $f(x,y)=x^{n}+c_{n-1}(y)x^{n-1}+\cdots+c_{1}(y)x+c_{0}(y)$
where $c_{d}(y)\in\mathbb{F}_{q}[y]$ of total degree $\mathrm{deg}(c_{n-d}(y))\leq d$
and let $c_{n-d}(y)=a_{d}^{(n-d)}y^{d}+\cdots+a_{0}^{(n-d)}$
where $a_{i}^{(n-d)}\in\mathbb{F}_{q}$. 

Let $h_{i}(x)=x^{n}+\alpha_{n-1}^{(i)}x^{n-1}+\cdots+\alpha_{0}^{(i)}$
and $h_{j}(x)=x^{n}+\beta_{n-1}^{(j)}x^{n-1}+\cdots+\beta_{0}^{(j)}$
with $\alpha_{v}^{(i)},\beta_{w}^{(j)}\in\mathbb{F}_{q}$ for $0\leq v,w\leq n-1$.
Then for $1\leq d\leq n$, we have 

\smallskip
$c_{n-d}(0)=a_{0}^{(n-d)}=\alpha_{n-d}^{(i)} ~~{\rm and}~~ c_{n-d}(1)=a_{d}^{(n-d)}+\cdots+a_{1}^{(n-d)}+a_{0}^{(n-d)}=\beta_{n-d}^{(j)}.$

\smallskip \noindent
It follows that there are $q^{d-1}$ choices for such $c_{n-d}(y)$
and hence there are $q^{0}q^{1}\cdots q^{n-1}=q^{n(n-1)/2}$ choices
for such $f(x,y)$. Similarly there are $q^{m(m-1)/2}$ choices for
$g(x,y)$. Let us call these numbers as $D_{1}=q^{n(n-1)/2}$ and
$R_{1}=q^{m(m-1)/2}$. Since we have $s^{2}$ choices for $(i,j)$
($i$ and $j$ are need not be different, $|A_{0,1}|=s^{2}D_{1}R_{1}.$ 

On the other hand if $(f(x,y),g(x,y))\in A_{0,1}$ then for
$\gamma,\theta\in\mathbb{F}_{q}$ with $\gamma<\theta$, $(f(x,\frac{y-\gamma}{\theta-\gamma}),g(x,\frac{y-\gamma}{\theta-\gamma}))\in A_{\gamma,\theta}$,
since $f(x,\frac{y-\gamma}{\theta-\gamma})$ is again a monic polynomial
in $x$ of total degree $n$ and $g(x,\frac{y-\gamma}{\theta-\gamma})$
is again a monic polynomial in $x$ of total degree $m$. This correspondence
(coordinate transformation) is bijective and preserves relative primeness.
Hence for a given $\gamma,\theta\in\mathbb{F}_{q}$
with $\gamma<\theta$, one has $|A_{\gamma,\theta}|=s^{2}D_{1}R_{1}.$

For a general bivariate polynomial $f(x,y)\in\mathbb{F}_{q}[x,y]$
which is monic in $x$ and of total degree $n$, one has $q^{2}q^{3}\cdots q^{n+1}=q^{2n}D_{1}$
choices. Similarly for a general bivariate polynomial $g(x,y)\in\mathbb{F}_{q}[x,y]$
which is monic in $x$ and of total degree $m$, one has $q^{2}q^{3}\cdots q^{m+1}=q^{2m}R_{1}$
choices and therefore the number of bivariate polynomial pairs
in $(f,g)$ which are monic in $x$ with total degrees, $\mathrm{deg}(f)=n$
and $\mathrm{deg}(g)=m$ is $q^{2n+2m}D_{1}R_{1}.$ Then with this
notation we have $x_{i}=\frac{b_{i}}{q^{2n+2m}D_{1}R_{1}}.$ 

Since we have $\binom{q}{2}$ choices for $(\gamma,\theta)$ with
$\gamma<\theta$, $|A_{\gamma,\theta}|=s^{2}D_{1}R_{1}$ for all $(\gamma,\theta)$
with $\gamma<\theta$ and $\mathrm{E}[X]=1,$ by Proposition 1 we have

\begin{eqnarray*}
\mathrm{Var}[X] & = & \mathrm{E}[X^{2}]-\mathrm{E}[X]^{2}
 ~ = ~ \mathrm{E}[X^{2}]-1^{2}
 ~ = ~ -1+\sum_{i=0}^{q}i^{2}x_{i}\\
 & = & -1+\sum_{i=0}^{q}i^{2}\frac{b_{i}}{q^{2n+2m}D_{1}R_{1}}
 ~ = ~-1+\frac{\sum_{i=0}^{q}i^{2}b_{i}}{q^{2n+2m}D_{1}R_{1}}\\
 & = & -1+\frac{\sum_{i=0}^{q-1}|A_{i}|+2\sum_{i<j}|A_{i}\cap A_{j}|}{q^{2n+2m}D_{1}R_{1}}
 ~ = ~ -1+\frac{\sum_{i=0}^{q-1}|A_{i}|}{q^{2n+2m}D_{1}R_{1}}+\frac{2\sum_{i<j}s^{2}D_{1}R_{1}}{q^{2n+2m}D_{1}R_{1}}\\
 & = & -1+\frac{\sum_{i=0}^{q}ib_{i}}{q^{2n+2m}D_{1}R_{1}}+\frac{2\sum_{i<j}s^{2}D_{1}R_{1}}{q^{2n+2m}D_{1}R_{1}}
 ~ = ~ -1+\sum_{i=0}^{q}i\frac{b_{i}}{q^{2n+2m}D_{1}R_{1}}+\frac{2\sum_{i<j}s^{2}D_{1}R_{1}}{q^{2n+2m}D_{1}R_{1}}\\
 & = & -1+\sum_{i=0}^{q}ix_{i}+\frac{2\sum_{i<j}s^{2}D_{1}R_{1}}{q^{2n+2m}D_{1}R_{1}}
 ~ = ~ -1+\mathrm{E}[X]+\frac{2\sum_{i<j}s^{2}D_{1}R_{1}}{q^{2n+2m}D_{1}R_{1}}\\
 & = & -1+1+\frac{2\sum_{i<j}s^{2}D_{1}R_{1}}{q^{2n+2m}D_{1}R_{1}}=\frac{2\binom{q}{2}s^{2}D_{1}R_{1}}{q^{2n}q^{2m}D_{1}R_{1}}
 ~ = ~ \frac{q(q-1)q^{2n+2m-2}}{q^{2n+2m}} \\
 & = & \frac{q(q-1)}{q^{2}} 
 ~ = ~1-\frac{1}{q}.  ~~~~~~~~~~~~~~~~~~~~~~~~~~~~~~~~~~~~~~~~~~~ \Box
\end{eqnarray*} 

\begin{thm}
Let $f,g\in\mathbb{F}_{q}[x_{1},x_{2},\ldots,x_{n}]$ be of the form
$f = c_l x_1^l + \sum_{i=0}^{l-1} c_{l-i}(x_2,\dots,x_n) x^i$  and
$g = d_m x_1^m + \sum_{i=0}^{m-1} d_{m-i}(x_2,\dots,x_n) x^i$
where $c_l \ne 0,$ $d_m \ne 0,$ $\deg c_{l-i} \le l-i$, and $\deg d_{m-i} \le m-i$,
thus $f$ and $g$ have total degree $l$ and $m$ respectively.
Let $X$ be a random variable which counts the number 
of $\gamma=(\gamma_{2},\ldots,\gamma_{n})\in\mathbb{F}_{q}^{n-1}$
such that $\mathrm{gcd}(f(x_1,\gamma_2,\ldots,\gamma_n),g(x_1,\gamma_2,\ldots,\gamma_n))\neq1.$ \\
If $n>1$, $l>0$ and $m>0$ then

\begin{itemize}
\item[(a)] $\E[X]=q^{n-2}$ and
\item[(b)] $\Var[X]=q^{n-2}(1-1/q).$
\end{itemize}

\end{thm}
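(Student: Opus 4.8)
The plan is to mirror the structure of the proof of Theorem 3, replacing the single evaluation variable $y$ by the $(n-1)$-tuple $(x_2,\dots,x_n)$ and invoking Proposition 1 with the index set $\FF_q^{n-1}$. First I would reduce to the monic case: since $c_l,d_m$ are nonzero constants and $\gcd(f(x_1,\gamma),g(x_1,\gamma))=1 \iff \gcd(c_l^{-1}f(x_1,\gamma),d_m^{-1}g(x_1,\gamma))=1$, scaling is a bijection on the sample space that fixes $X$, so I may assume $f,g$ are monic in $x_1$. Let $N_f$ (resp. $N_g$) be the number of monic-in-$x_1$ polynomials of the prescribed form and total degree $l$ (resp. $m$); writing the coefficient of $x_1^i$ as a polynomial in $n-1$ variables of total degree $\le l-i$ and counting monomials, one has $N_f=q^{\sum_{j=1}^{l}\binom{j+n-1}{n-1}}$ and likewise for $N_g$, so the sample space has $N_fN_g$ elements. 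For each $\gamma\in\FF_q^{n-1}$ I define $A_\gamma$ as the set of monic pairs with $\gcd(f(x_1,\gamma),g(x_1,\gamma))\neq 1$, and for distinct $\gamma,\theta$ put $A_{\gamma,\theta}=A_\gamma\cap A_\theta$; fixing any total order on $\FF_q^{n-1}$ and taking $\Gamma=\{A_\gamma:\gamma\in\FF_q^{n-1}\}$, the variable $X$ counts membership in the $B_i$ of the notation introduced for Proposition 1.

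The two quantities I need are $|A_\gamma|$ and $|A_{\gamma,\theta}|$, and the cleanest route is to compute the probability that a uniformly random monic pair lies in each. The key observation is that specialization is uniform: for a fixed point $\gamma$, as $f$ ranges over its valid forms the univariate specialization $f(x_1,\gamma)$ is uniformly distributed over all monic degree-$l$ polynomials in $\FF_q[x_1]$, because each coefficient polynomial contributes its value $c_{l-i}(\gamma)\in\FF_q$ independently and uniformly (evaluation at a point is a nonzero linear functional on any space of polynomials of total degree $\ge 1$), and likewise $g(x_1,\gamma)$ is uniform and independent. Hence $\Prob[(f,g)\in A_\gamma]$ equals the probability that two independent uniform monic polynomials of degrees $l,m$ are not coprime, which is $1/q$ by the fact cited in the proof of Theorem 3. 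Therefore $|A_\gamma|=N_fN_g/q$ for every $\gamma$. Applying Proposition 1(a) with $t_1=\sum_{\gamma}|A_\gamma|=q^{n-1}\cdot N_fN_g/q$ gives $\E[X]=t_1/(N_fN_g)=q^{n-2}$, which is (a).

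For the variance I would compute $|A_{\gamma,\theta}|$ for distinct $\gamma,\theta$, and here the one genuinely new point beyond the $n=2$ case appears. In Theorem 3 distinct evaluation values were sent to the reference pair $(0,1)$ by the affine map $y\mapsto(y-\gamma)/(\theta-\gamma)$, but now $\gamma,\theta$ are points of $\FF_q^{n-1}$. I would instead argue directly that the joint specialization map $c\mapsto(c(\gamma),c(\theta))$ is surjective onto $\FF_q^2$ on every coefficient space of total degree $\ge 1$: the functionals $\mathrm{ev}_\gamma$ and $\mathrm{ev}_\theta$ agree on constants, and since $\gamma\neq\theta$ some coordinate $x_j$ separates them, so the two functionals are linearly independent and the fibers have uniform size $q^{(\dim)-2}$. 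Consequently $(f(x_1,\gamma),f(x_1,\theta))$ is uniform over pairs of monic degree-$l$ polynomials, independently of the analogous pair for $g$, so the coprimality events at $\gamma$ and at $\theta$ are independent, each of probability $1/q$. Thus $\Prob[(f,g)\in A_{\gamma,\theta}]=1/q^2$ and $|A_{\gamma,\theta}|=N_fN_g/q^2$, the same value for all $\binom{q^{n-1}}{2}$ pairs.

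Finally I would assemble the variance. Applying Proposition 1(b) with $t_2=\binom{q^{n-1}}{2}N_fN_g/q^2$ gives
\[
\Var[X]=\frac{t_1+2t_2}{N_fN_g}-\E[X]^2=q^{n-2}+\frac{q^{n-1}(q^{n-1}-1)}{q^2}-q^{2n-4}=q^{n-2}-q^{n-3}=q^{n-2}(1-1/q),
\]
establishing (b). I expect the main obstacle to be exactly the surjectivity/independence step of the third paragraph: one must confirm that evaluation at two distinct points of $\FF_q^{n-1}$ imposes two independent conditions on each coefficient polynomial (which needs total degree $\ge 1$, guaranteed since $l-i\ge 1$ for $i\le l-1$), replacing the single-variable reparametrization that sufficed for Theorem 3. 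The remaining bookkeeping is routine and specializes correctly to Theorem 3 when $n=2$, where $q^{n-2}=1$.
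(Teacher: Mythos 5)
Your proof is correct, and it reaches the same counts ($|A_\gamma| = |U|/q$ and $|A_\gamma\cap A_\theta| = |U|/q^2$, then Proposition 1) but by a genuinely different route at the technical heart of the argument. The paper reduces every pair $(\alpha,\beta)$ of distinct evaluation points to the reference pair $\bigl((0,\dots,0),(1,0,\dots,0)\bigr)$ by explicitly constructing an invertible affine change of coordinates on $(x_2,\dots,x_n)$; this forces a case split according to whether $\alpha$ and $\beta$ are linearly independent or dependent, and the construction of the matrix $A$ and translation vector occupies most of the proof. You instead observe that for each non-leading coefficient $c_{l-i}$, which ranges uniformly over the space of polynomials of total degree $\le l-i\ge 1$ in $x_2,\dots,x_n$, the two evaluation functionals $\mathrm{ev}_\gamma$ and $\mathrm{ev}_\theta$ are linearly independent (they agree on constants but are separated by $x_j-\gamma_j$ for any coordinate $j$ with $\gamma_j\ne\theta_j$), so the joint specialization map has fibers of uniform size and $\bigl(f(x_1,\gamma),f(x_1,\theta),g(x_1,\gamma),g(x_1,\theta)\bigr)$ is uniform over quadruples of monic univariate polynomials; independence of the two non-coprimality events, each of probability $1/q$, is then immediate from the cited fact about random monic pairs. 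This linear-functional argument replaces the entire coordinate-change construction and its case analysis with a few lines, and it degenerates correctly to the single-point case needed for $\mathrm{E}[X]$; what the paper's explicit transformation buys in exchange is a concrete bijection $A_{0,e_1}\to A_{\alpha,\beta}$ of the same flavour as the translation used in Theorems 2 and 3, at the cost of considerable length. Your bookkeeping for $N_f$, $N_g$, and the final variance computation matches the paper's.
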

It follows from (a) that if $\gamma$ is chosen at random from $\FF_q^{n-1}$ then
$$
\Prob[ \, \gcd( f(x_1,\gamma_2,\dots,\gamma_n), g(x_2,\gamma_2,\dots,\gamma_n) \ne 1 \, ] \, = \, \frac{q^{n-2}}{q^{n-1}} \, =\,  \frac{1}{q}. 
$$
%
%

\begin{proof}
The proof runs along the same lines of the proof of Theorem 2. Let
$U$ be the set of all possible monic pairs $(f,g)\in\mathbb{F}_{q}[x_{1},\ldots,x_{n}]^{2}$
where $f,g$ are as described in the theorem and for $\alpha=(\alpha_{2},\ldots,\alpha_{n})\in\mathbb{F}_{q}^{n-1}$,
let $A_{\alpha}$ be the set of all such polynomial pairs with
$\mathrm{gcd}(f(x_{1},\alpha),g(x_{1},\alpha))\neq1.$ 

For the first part we will consider the monic pairs $(f,g)$ with
\[
\mathrm{gcd}(f(x_{1},0,0\ldots,0),g(x_{1},0,0\ldots,0))\neq1
\]
 and compute that probability of this event is $1/q$ again. Then
for a given non-zero $\alpha=(\alpha_{2},\ldots,\alpha_{n})\in\mathbb{F}_{q}^{n-1}$
considering the coordinate change 
\[
\bar{f}(x_{1},\alpha_{2},\ldots,\alpha_{n})=f(x_{1},x_{2}-\alpha_{2},\ldots,x_{n}-\alpha_{n})
\]
 and using Proposition 1 part (a), since there are $q^{n-1}$ possible
such $\alpha$'s, we will see that $\mathrm{E}[X]=q^{n-1}q^{-1}=q^{n-2}.$

For the second part we will consider the monic pairs $(f,g)$ with
\begin{eqnarray*}
\mathrm{gcd}(f(x_{1},0,0\ldots,0),g(x_{1},0,0\ldots,0)) & \neq & 1\\
\mathrm{gcd}(f(x_{1},1,0\ldots,0),g(x_{1},1,0\ldots,0)) & \neq & 1
\end{eqnarray*}
and see that probability of this event is $1/q^{2}$ again. For a
given pair $(\alpha,\beta)\in\mathbb{F}_{q}^{n-1}\times\mathbb{F}_{q}^{n-1}$
with $\alpha\neq\beta$, this time the coordinate change of the second
part of the proof that computes the variance may not be that obvious.
We give the explicit contruction below. Then by enumarating the elements
of $\mathbb{F}_{q}^{n-1}$ from $0$ to $q^{n-1}-1$ and using Proposition 
1 part (b), since there are $\binom{q^{n-1}}{2}$ possible pairs $(\alpha,\beta)$
with $\alpha<\beta,$ we will see that 
\begin{eqnarray*}
\mathrm{Var}[X] & = & \mathrm{E}[X^{2}]-\mathrm{E}[X]^{2}=-\mathrm{E}[X]^{2}+\sum_{i=0}^{q^{n-1}-1}i^{2}\mathrm{Pr}(X=i)\\
 & = & -\mathrm{E}[X]^{2}+\sum_{i=0}^{q^{n-1}-1}i^{2}\frac{b_{i}}{|U|}=-\mathrm{E}[X]^{2}+\frac{\sum_{i=0}^{q^{n-1}-1}i^{2}b_{i}}{|U|}\\
 & = & -\mathrm{E}[X]^{2}+\frac{\sum_{i=0}^{q^{n-1}-1}|A_{i}|+2\sum_{i<j}|A_{i}\cap A_{j}|}{|U|}\\
 & = & -\mathrm{E}[X]^{2}+\sum_{i=0}^{q^{n-1}-1}\frac{1}{q}+2\sum_{i<j}\frac{1}{q^{2}}\\
 & = & -\mathrm{E}[X]^{2}+q^{n-1}\frac{1}{q}+2\binom{q^{n-1}}{2}\frac{1}{q^{2}}\\
 & = & -q^{2n-4}+q^{n-2}+q^{n-1}(q^{n-1}-1)q^{-2}\\
 & = & q^{n-2}-q^{n-3}=q^{n-2}(1-\frac{1}{q}).
\end{eqnarray*}

For a given pair $(\alpha,\beta)\in\mathbb{F}_{p}^{n-1}\times\mathbb{F}_{p}^{n-1}$
with $\alpha\neq\beta$, let $\alpha=(\alpha_{2},\ldots,\alpha_{n})$
and $\beta=(\beta_{2},\ldots,\beta_{n})$. Our aim is to find a coordinate
change such that
\begin{eqnarray*}
\bar{f}(x_{1},\alpha_{2},\ldots,\alpha_{n}) & = & f(x_{1},0,0\ldots,0)\\
\bar{f}(x_{1},\beta_{2},\ldots,\beta_{n}) & = & f(x_{1},1,0\ldots,0)
\end{eqnarray*}
where

\[
\bar{f}(x_{1},x_{2},\ldots,x_{n})=f(x_{1},a_{20}+a_{22}x_{2}+\cdots+a_{2n}x_{n},\ldots,a_{n0}+a_{n2}x_{n}+\cdots+a_{nn}x_{n}).
\]
Note that this transformation does not change the leading term in
$x_{1}$, so it preserves monicness and degree in $x_{1}$ and preserves
coprimality. To make this transformation bijective we need a $(n-1)\times(n-1)$
matrix
\[
A=\left(\begin{array}{ccc}
a_{22} & \ldots & a_{2n}\\
\vdots & \vdots & \vdots\\
a_{n2} & \ldots & a_{nn}
\end{array}\right)
\]
which is invertible and to satisfy the relations we need 
\begin{eqnarray*}
a_{22}\alpha_{2}+\cdots+a_{2n}\alpha_{n} & = & -a_{20}\\
a_{22}\beta_{2}+\cdots+a_{2n}\beta_{n} & = & 1-a_{20}
\end{eqnarray*}
 and for $3\leq j\leq n$ 
\begin{eqnarray*}
a_{j2}\alpha_{2}+\cdots+a_{jn}\alpha_{n} & = & -a_{j0}\\
a_{j2}\beta_{2}+\cdots+a_{jn}\beta_{n} & = & -a_{j0}.
\end{eqnarray*}
Let us consider $\alpha$ and $\beta$ as column vectors and suppose
that $\alpha$ and $\beta$ are linearly independent over $\mathbb{F}_{p}$.
Then there exist a pair $(i,j)$ such that $\left|\begin{array}{cc}
\alpha_{i} & \alpha_{j}\\
\beta_{i} & \beta_{j}
\end{array}\right|\neq0.$ Applying the necessary permutation if needed, we may assume that
$\left|\begin{array}{cc}
\alpha_{2} & \alpha_{3}\\
\beta_{2} & \beta_{3}
\end{array}\right|\neq0.$ Then consider the invertible $(n-1)\times(n-1)$ matrix $B$
\[
B:=\left(\begin{array}{ccccc}
\alpha_{2} & \alpha_{3} & \ldots & \ldots & \alpha_{n}\\
\beta_{2} & \beta_{3} & \ldots & \ldots & \beta_{n}\\
0 & 0 & 1 & \ldots & 0\\
\vdots & \vdots & \vdots & \vdots & \vdots\\
0 & 0 & 0 & \ldots & 1
\end{array}\right)
\]
Let $a_{j}$ denotes $a_{j}=(a_{22}\,\cdots\,a_{2n})^{T}$ for $2\leq j\leq n$.
Let $a_{2}:=B^{-1}\left(-1\:0\,\cdots\,0\right)^{T}$, so that $a_{2}\cdot\alpha^{T}=-1,\,a_{2}\cdot\beta^{T}=0$.
Let $a_{3}:=B^{-1}\left(1\,\mathbf{e}_{1}^{T}\right)^{T}$ so that
$a_{3}\cdot\alpha^{T}=1,\,a_{3}\cdot\beta^{T}=1$ and let $a_{j}:=B^{-1}(0\,\mathbf{e}_{j-2}^{T})^{T}$
so that $a_{j}\cdot\alpha^{T}=0,\,a_{j}\cdot\beta^{T}=0$ for $3<j<n$
where $\mathbf{e}_{i}$'s denote canonical basis vectors for $\mathbb{Z}_{p}^{n-2}.$
Let also $a_{20}=1,a_{30}=-1$ and $a_{j0}=0.$ Now, if we define
$A=\left(a_{2}\,\cdots\,a_{n}\right)^{T}$ then by construction of
$a_{i}$'s we have\\
 $A\cdot\alpha=\left(\begin{array}{c}
a_{2}\cdot\alpha^{T}\\
a_{3}\cdot\alpha^{T}\\
\vdots\\
a_{n}\cdot\alpha^{T}
\end{array}\right)=\left(\begin{array}{c}
-1\\
1\\
0\\
\vdots\\
0
\end{array}\right)$ and $A\cdot\beta=\left(\begin{array}{c}
a_{2}\cdot\beta^{T}\\
a_{3}\cdot\beta^{T}\\
\vdots\\
a_{n}\cdot\beta^{T}
\end{array}\right)=\left(\begin{array}{c}
0\\
1\\
0\\
\vdots\\
0
\end{array}\right)$. Hence we get $a_{2}\cdot\alpha^{T}+a_{20}=-1+1=0$ and $a_{2}\cdot\beta^{T}+a_{20}=0+1=1$
as needed. Also $a_{3}\cdot\alpha^{T}+a_{30}=1-1=0$ and $a_{3}\cdot\beta^{T}+a_{30}=1-1=0$
as needed. Also $a_{j}\cdot\alpha^{T}+a_{j0}=0+0=0$ and $a_{j}\cdot\beta^{T}+a_{j0}=0+0=0$
as needed. It remains to show that the set $\left\{ a_{2},a_{3},\ldots,a_{n}\right\} $
is linearly independent.

Now since $B$ is invertible the set $\left\{ a_{2},a_{3},\ldots,a_{n}\right\} $
is linearly independent iff the set $\left\{ Ba_{2},Ba_{3},\ldots,Ba_{n}\right\} $
is linearly independent. Let for some $\gamma_{i}\in\mathbb{F}_{p},2\leq i\leq n$
we have 
\[
\gamma_{2}\left(\begin{array}{c}
-1\\
0\\
0\\
\vdots\\
0
\end{array}\right)+\gamma_{3}\left(\begin{array}{c}
1\\
1\\
0\\
\vdots\\
0
\end{array}\right)+\gamma_{4}\left(\begin{array}{c}
0\\
0\\
1\\
\vdots\\
0
\end{array}\right)+\cdots+\gamma_{n}\left(\begin{array}{c}
0\\
0\\
0\\
\vdots\\
1
\end{array}\right)=0.
\]
Then it can be easily seen that $\gamma_{i}=0$ and hence $\left\{ a_{2},a_{3},\ldots,a_{n}\right\} $
is a linearly independent set. It follows that $A$ is invertible
and the translation we have constructed 
\[
\left(\begin{array}{ccc}
a_{22} & \ldots & a_{2n}\\
\vdots & \vdots & \vdots\\
a_{n2} & \ldots & a_{nn}
\end{array}\right)\left(\begin{array}{c}
x_{2}\\
\vdots\\
x_{n}
\end{array}\right)+\left(\begin{array}{c}
1\\
-1\\
0\\
\vdots\\
0
\end{array}\right)
\]
satisfies the conditions we needed. \\
Example: Let $\alpha=(2,0,0,0)$ and $\beta=(2,3,0,1)$ in $\mathbb{Z}_{5}^{4}.$
Then 
\[
B=\left(\begin{array}{cccc}
2 & 0 & 0 & 0\\
2 & 3 & 0 & 1\\
0 & 0 & 1 & 0\\
0 & 0 & 0 & 1
\end{array}\right)\Rightarrow B^{-1}=\left(\begin{array}{cccc}
3 & 0 & 0 & 0\\
3 & 2 & 0 & 3\\
0 & 0 & 1 & 0\\
0 & 0 & 0 & 1
\end{array}\right)\,\mathrm{mod}\,5.
\]
$a_{2}=B^{-1}\left(\begin{array}{c}
-1\\
0\\
0\\
0
\end{array}\right)=\left(\begin{array}{c}
2\\
2\\
0\\
0
\end{array}\right),a_{3}=B^{-1}\left(\begin{array}{c}
1\\
1\\
0\\
0
\end{array}\right)=\left(\begin{array}{c}
3\\
0\\
0\\
0
\end{array}\right),a_{4}=B^{-1}\left(\begin{array}{c}
0\\
0\\
1\\
0
\end{array}\right)=\left(\begin{array}{c}
0\\
0\\
1\\
0
\end{array}\right),a_{5}=B^{-1}\left(\begin{array}{c}
0\\
0\\
0\\
1
\end{array}\right)=\left(\begin{array}{c}
0\\
3\\
0\\
1
\end{array}\right).$ Then the transformation is
\[
\left(\begin{array}{cccc}
2 & 2 & 0 & 0\\
3 & 0 & 0 & 0\\
0 & 0 & 1 & 0\\
0 & 3 & 0 & 1
\end{array}\right)\left(\begin{array}{c}
x_{2}\\
x_{3}\\
x_{4}\\
x_{5}
\end{array}\right)+\left(\begin{array}{c}
1\\
-1\\
0\\
0
\end{array}\right)
\]
 and $\bar{f}(x_{1},x_{2},x_{3},x_{4},x_{5})=f(x_{1},1+2x_{2}+2x_{3},-1+3x_{2},x_{4},3x_{3}+x_{5})$
and 
\begin{eqnarray*}
\bar{f}(x_{1},2,0,0,0) & = & f(x_{1},0,0,0,0)\\
\bar{f}(x_{1},2,3,0,1) & = & f(x_{1},1,0,0,0).
\end{eqnarray*}
Now suppose that $\alpha$ and $\beta$ are linearly dependent over
$\mathbb{F}_{p}$. Again applying the necessary permutation if needed,
we may assume that $0\neq\alpha_{2}\,\mathrm{and}\,\alpha_{2}\neq\beta_{2}.$
Let $a_{20}=-\alpha_{2}/(\beta_{2}-\alpha_{2})$ ,$a_{22}=1/(\beta_{2}-\alpha_{2})$
and $a_{2j}=0$ for $3\leq j\leq n.$ Then we have $a_{2}\cdot\alpha^{T}+a_{20}=\frac{\alpha_{2}}{\beta_{2}-\alpha_{2}}-\frac{\alpha_{2}}{\beta_{2}-\alpha_{2}}=0$
and $a_{2}\cdot\beta^{T}+a_{20}=\frac{\beta_{2}}{\beta_{2}-\alpha_{2}}-\frac{\alpha_{2}}{\beta_{2}-\alpha_{2}}=1$
as needed. \\
Now consider the $1\times(n-1)$ matrix $B=\left(\alpha_{2}\cdots\alpha_{n}\right).$
Since $\alpha\neq0$ $\mathrm{dim}(\mathrm{Ker}(B))=n-2.$ Let $v_{2},\ldots,v_{n}$
be a basis for $\mathrm{Ker}(B).$ Let for some $\gamma_{i}\in\mathbb{F}_{p},2\leq i\leq n$
we have 
\[
\gamma_{2}a_{2}+\gamma_{3}v_{3}+\gamma_{4}v_{4}+\cdots+\gamma_{n}v_{n}=0.
\]
 Then applying $B$ from left hand side we have $\gamma_{2}Ba_{2}=\gamma_{2}\alpha_{2}/(\beta_{2}-\alpha_{2})=0\Rightarrow\gamma_{2}=0.$
It follows that $\gamma_{i}=0$ and $\{a_{2},v_{2},\ldots,v_{n}\}$
is linearly independent. Then $A:=\left(a_{2}\,v_{2}\,\cdots\,v_{n}\right)^{T}$
is invertible. It can be readily verified that the translation we
have constructed
\[
A\left(\begin{array}{c}
x_{2}\\
\vdots\\
x_{n}
\end{array}\right)+\left(\begin{array}{c}
\frac{-\alpha_{2}}{\beta_{2}-\alpha_{2}}\\
0\\
\vdots\\
0
\end{array}\right)
\]
satisfies the remaining conditions needed. \\
Example: Let $\alpha=(1,1,0,0)$ and $\beta=(2,2,0,0)$ in $\mathbb{Z}_{5}^{4}.$
Then $a_{20}=-\alpha_{2}/(\beta_{2}-\alpha_{2})=4$ and $a_{22}=1/(\beta_{2}-\alpha_{2})=1\:\mathrm{mod}\:5$.
$B=\left(1\:1\,0\,0\right)$
\[
v_{3}=\left(\begin{array}{c}
3\\
2\\
0\\
0
\end{array}\right),v_{4}=\left(\begin{array}{c}
0\\
0\\
1\\
0
\end{array}\right),v_{5}=\left(\begin{array}{c}
0\\
0\\
0\\
1
\end{array}\right)
\]
 and then the transformation is
\[
\left(\begin{array}{cccc}
1 & 0 & 0 & 0\\
3 & 2 & 0 & 0\\
0 & 0 & 1 & 0\\
0 & 0 & 0 & 1
\end{array}\right)\left(\begin{array}{c}
x_{2}\\
x_{3}\\
x_{4}\\
x_{5}
\end{array}\right)+\left(\begin{array}{c}
4\\
0\\
0\\
0
\end{array}\right)
\]
 where $\bar{f}(x_{1},x_{2},x_{3},x_{4},x_{5})=f(x_{1},4+x_{2},3x_{2}+2x_{3},x_{4},x_{5})$
and 
\begin{eqnarray*}
\bar{f}(x_{1},1,1,0,0) & = & f(x_{1},0,0,0,0)\\
\bar{f}(x_{1},2,2,0,0) & = & f(x_{1},1,0,0,0).
\end{eqnarray*}

\end{proof}

\subsection{A comparison with the binomial distribution.}

Let $Y$ be a random variable from a binomial distribution $B(n,p)$
with $n$ trials and probability $p$.  So $0 \le Y \le n$,
$\Prob[Y=k] = {n \choose k} p^k (1-p)^{n-k}$,  $E[Y]=np$ and $\Var[Y]=np(1-p)$.
We noticed that the mean and variance of $X$ in Theorem 2 is the same as the mean and variance of 
the binomial distribution $B(n,p)$ with $n=q$ trials and probability $p=1/q.$
In Table 1 below we compare the two distributions for

\medskip \quad
\begin{tabular}{ll}
$ f = x^2 + (a_1 y + a_2 )x + (a_3 y^2 + a_4 y + a_5)$ and \\
$ g = x^2 + (b_1 y + b_2 )x + (b_3 y^2 + b_4 y + b_5)$
\end{tabular}

\medskip \noindent
in $\FF_q[x,y]$ with $q=7$.
Note that there are $7^{10}$ pairs for $f,g$.
In Table 1 $F_k$ is the number of pairs for which $\gcd(f(x,\alpha),g(x,\alpha)) \ne 1$
for exactly $k$ values for $\alpha \in \FF_7$.
We computed $F_k$ by computing this $\gcd$ for all distinct pairs using Maple.
The values for $B_k$ come from $B(7,1/7)$.  They are given by $B_k = 7^{10} \Prob[Y=k]$.

\newpage

\begin{table}[h!]
{ \small
\begin{tabular}{|r r r r r r r r r|} \hline
$k  $ &    0   &    1  &      2  &      3 &       4 &     5&  6  & 7    \\
$F_k$ & 96606636& 110666892& 56053746& 17287200& 1728720& 0&  0  & 132055 \\
$B_k$ & 96018048& 112021056& 56010528& 15558480& 2593080& 259308& 14406& 343 \\  \hline
\end{tabular} \vspace*{-1mm}
\caption{Data for quadratic $(f,g)$ in $\FF_7[x,y]$}
}
\end{table}

\vspace*{-1mm}
\noindent
The two zeros $F_{5}$ and $F_{6}$ can be explained as follows.
Let $R(y)$ be the Sylvester resultant of $f$ and $g$.
Then applying Lemma 1 we have
$ R(\alpha) = 0 \iff \gcd(f(x,\alpha),g(x,\alpha)) \ne 1 ~{\rm for}~ \alpha \in \FF_q.$
For our quadratic polynomials $f$ and $g$, Lemma 1(ii) implies
$\deg R \le \deg f \deg g = 4$.
%
%
%
%
%
Hence $R(y)$ can have at most 4 distinct roots unless $f$ and $g$ are not coprime in $\FF_7[x,y]$ in
which case $R(y)=0$ and it has 7 roots.  Therefore $F_5=0,$ $F_6=0$ and $F_{7} = 132055$ 
is the number pairs $f,g$ which are not coprime in $\FF_7[x,y]$.


\bibliographystyle{plain}

\section*{Appendix A}

Below is Magma code for quadratic polynomials over $\FF_4$.
For each pair of quadratic polynomials $F,G \in \FF_4[x,y]$ we compute
$X = | \{ \alpha \in \FF_4 : \gcd(F(x,\alpha),G(x,\alpha) \ne 1 \} |.$
The code counts $A_k$ the number of pairs $(F,G)$ with $k=X$ and computes
${\rm E}[X]$ and ${\rm Var}[X]$.

{\small
\begin{verbatim}
q := 4;
Fq<z> := FiniteField(q);
P<x,y> := PolynomialRing(Fq,2);
N := 0; // counter
M := 0; // mean
V := 0; // variance
A := AssociativeArray(); // frequencies

for X in [0..q] do A[X] := 0; end for;

for a in Fq do for b in Fq do
for c in Fq do for d in Fq do for e in Fq do
for r in Fq do for s in Fq do 
for t in Fq do for u in Fq do for v in Fq do

if not ( [a,b,c,d,e] gt [r,s,t,u,v] ) then
    X := 0;
    for y in Fq do
        F := x^2+(a*y+b)*x+(c*y^2+d*y+e);
        G := x^2+(r*y+s)*x+(t*y^2+u*y+v);
        if Gcd(F,G) ne 1 then X := X+1; end if;
    end for;
    if [a,b,c,d,e] eq [r,s,t,u,v] then
        N := N+1; A[X] := A[X]+1;
        M := M+X; V := V+(X-1)*(X-1);
    else
        N := N+2;   A[X] := A[X]+2;
        M := M+2*X; V := V+2*(X-1)*(X-1);
    end if;
end if;

end for; end for; end for; end for; end for;
end for; end for; end for; end for; end for;

"field size", q;
"N", N, q^10;
"frequencies", A[0],A[1],A[2],A[3],A[4];
"mean", 1.0*M/q^10;
"variance", 1.0*V/q^10;
\end{verbatim}
}

\end{document}